\newtheorem{thm}{Theorem}[section]
\newtheorem{thr}[thm]{Theorem}
\newtheorem{prop}[thm]{Proposition}
\newtheorem{lemma}[thm]{Lemma}
\theoremstyle{definition}
\newtheorem{defi}[thm]{Definition}
\newtheorem{ex}[thm]{Example}
\newtheorem{nota}[thm]{Notation}
\newtheorem{rem}[thm]{Remark}
\theoremstyle{remark}
\newcommand{\pic}{\mathrm{Pic}}
\newcommand{\inv}{\mathrm{Inv}}
\newcommand{\br}{\mathrm{Br}}
\newcommand{\h}{\mathrm{H}}
\newcommand{\e}{\mathrm{E}}
\newcommand{\spec}{\mathrm{Spec}}
\newcommand{\equi}{\mathrm{bal}}
\newcommand{\res}{\mathrm{R}_{k^\prime/k}}
\newcommand{\ext}{\mathrm{Ext}^1}
\title{Degree 2 cohomological invariants of linear algebraic groups}
\author{Alexandre Lourdeaux}
\date{\today}
\begin{document}

\maketitle

\begin{abstract}
The paper deals with cohomological invariants of smooth and connected linear algebraic groups over an arbitrary field. More precisely, we study degree $2$ invariants with coefficients in $\mathbb{Q}/\mathbb{Z}(1)$, that is invariants taking values in the Brauer group. Our main tool is the \'etale cohomology of sheaves on simplicial schemes. We get a description of these invariants for \emph{every} smooth and connected linear groups, in particular for non reductive groups over imperfect fields.

\paragraph{Keywords :} Brauer groups, Cohomological invariants, \'Etale cohomology, Galois cohomology, Imperfect fields, Linear algebraic groups, Simplicial schemes.
\end{abstract}

\setcounter{section}{-1}

\tableofcontents

\vspace{3cm}

\section{Introduction}

\paragraph{General context.}
Let $G$ be a smooth algebraic group over a base field $k$. For every field extension $K/k$, one can define the first Galois cohomology set $\h^1(K,G)$, which classifies (up to isomorphism) $G$-torsors over $K$. So the sets $\h^1(K,G)$ carry information on the geometry and the group structure of $G$. Moreover, when $G$ is the algebraic group of automorphisms of some algebraic structure, the set $\h^1(K,G)$ also classifies the \emph{forms} of the given algebraic structure. So, if one is interested in geometry or in purely algebraic objects, the $\h^1(K,G)$'s are worth studying. For this purpose one has the notion of \emph{$H$-invariant} as formally defined by Serre \cite{Serre_ProgresProblemes} for any functor $H$ from the catagory of field extensions $K/k$ into the category of Abelian groups. A special case is that of functors $H=\h^d(\star,M)$,  the degree $d$ Galois cohomoloy group of the Galois module $M$ over $k$; they are \emph{cohomological invariants} of $G$.

Cohomological invariants generalize the usual invariants of quadratic forms. Indeed, assuming the characteristic of $k$ is not $2$, $\h^1(K,\mathrm{O}_n)$ classifies the isometric classes of rank $n$ quadratic forms over $K$, and Stiefel-Whitney classes, as the Hasse-Witt invariant, define cohomological invariants of the orthogonal group $\mathrm{O}_{n}/k$ with respect to the functors $H=\h^\ast(\star,\mathbb{Z}/2)$.

Originally, $H$-invariants were studied mostly for functors $H$ which behave well with respect to the characteristic of the base field (or simply functors that ignore it). Usually these functors can be framed into the theory of \emph{cycle modules} developped by Rost in \cite{Rost_Chowgroups}. For instance, such a functor can be $H:= \h^d(\star,M)$ for a finite Galois module $M$ whose order is \emph{prime} to the characteristic of the base field. Such functors have many advantages, the most important for calculations being that they are \emph{homotopic invariant}. For a more general study that takes into account the phenomena linked to characteristic of the base field, one can work with Kato's Galois modules $\mathbb{Q}/\mathbb{Z}(j)$, $j \in \mathbb{Z}_{\geqslant 0}$ which can be defined as sheaves on the big \'etale site of $\spec(k_0)$, where $k_0$ is the prime field of $k$ (\cite{Illusie, Bloch-Kato}). Note that $\h^2(k,\mathbb{Q}/\mathbb{Z}(1))$ is just the Brauer group of $k$. So degree $2$ cohomological invariants with coefficients in $\mathbb{Q}/\mathbb{Z}(1)$ are actually $\br$-invariants, denoting the Brauer group functor by $\br$. But the sheaves $\mathbb{Q}/\mathbb{Z}(j)$ have a general framework which we rely on in the paper.

In particular, in \cite{MerkurjevBlinstein}, Blinstein and Merkurjev state and prove \begin{thm}[{\cite[Th. 2.4.]{MerkurjevBlinstein}}]
Let $G$ be a smooth connected linear algebraic group over a field $k$. Assume $G$ is \emph{reductive} when the characteristic of $k$ is positive. There is an isomorphism \[ \pic(G) \simeq \inv(G,\br)_0 = \inv \left( G,\h^2(\star,\mathbb{Q}/\mathbb{Z}(1)) \right)_0 \] where $\pic(G)$ is the Picard group of $G$ and the subsripts $0$ mean the subgroups of \emph{normalized invariants} (see Section \ref{section presentation}), which are the relevant parts of groups of invariants.
\end{thm}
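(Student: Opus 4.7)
The plan is to identify both $\pic(G)$ and $\inv(G,\br)_0$ with a common piece of the étale cohomology of the simplicial classifying scheme $BG_\bullet$, whose $n$-th level is $G^n$ with the usual face and degeneracy maps of the bar construction. A normalized invariant $\alpha \in \inv(G,\br)_0$ is determined by its value on the ``generic'' $G$-torsor, which lives naturally as a class on $BG_\bullet$; concretely, I would show that $\inv(G,\br)_0$ embeds into (and in fact equals) the kernel of the pullback $\h^2_{\text{\'et}}(BG_\bullet,\mathbb{Q}/\mathbb{Z}(1)) \to \h^2_{\text{\'et}}(\spec k,\mathbb{Q}/\mathbb{Z}(1))$ along the section $\spec k \to BG_\bullet$ arising from the identity. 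This reinterprets normalization as the vanishing of the pullback to the base point.

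Next I would compute $\h^2_{\text{\'et}}(BG_\bullet,\mathbb{Q}/\mathbb{Z}(1))$ using the spectral sequence associated with the simplicial structure, namely $E_1^{p,q} = \h^q_{\text{\'et}}(G^p,\mathbb{Q}/\mathbb{Z}(1)) \Rightarrow \h^{p+q}_{\text{\'et}}(BG_\bullet,\mathbb{Q}/\mathbb{Z}(1))$. Passing to $\mathbb{G}_m$-coefficients via the Kummer-type distinguished triangle defining $\mathbb{Q}/\mathbb{Z}(1)$, the relevant contribution in total degree $2$ comes from the diagonal term $E_1^{1,1} = \pic(G)$. The differential $d_1 \colon \pic(G) \to \pic(G \times G)$ is the alternating sum $m^* - p_1^* - p_2^*$ where $m$ is the multiplication. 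The decisive input is a Rosenlicht-type splitting $\pic(G \times G) \simeq p_1^*\pic(G) \oplus p_2^*\pic(G)$ (plus a character contribution that vanishes modulo normalization), valid for $G$ smooth, connected and \emph{reductive in positive characteristic}. This splitting forces $d_1$ to vanish on the whole of $\pic(G)$, so the full Picard group survives in $E_\infty$ and contributes to $\h^2(BG_\bullet,\mathbb{Q}/\mathbb{Z}(1))$.

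The remaining work is to show that this contribution is exactly $\inv(G,\br)_0$: one must check that the edge map from $\pic(G) = \h^1(G,\mathbb{G}_m)$ to invariants, sending a line bundle $L$ to the class assigning to a $G$-torsor $T$ the Brauer obstruction to descending $L$ to $T$, is a bijection once normalized. I expect the main obstacle to be the careful analysis of the lower-weight terms $E_1^{0,2}$, $E_1^{2,0}$ and the differentials leaving them: one needs $E_1^{2,0} = \h^0(G^2,\mathbb{G}_m)/k^\times$ (the character group of $G^2$ modulo constants) to behave as expected and the differential from $E_1^{0,2}$ to account precisely for the kernel $\br(k)$ we quotient out. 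The reductivity hypothesis is essential here because over imperfect fields non-reductive $G$ may acquire extra units and extra characters on $G^n$, spoiling the splitting of $\pic(G^2)$ and the vanishing of relevant character contributions, which is exactly the obstruction to extending the theorem to the general case.
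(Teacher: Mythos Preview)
Your spectral-sequence analysis of $\h^2(BG_\bullet,-)$ is essentially the same computation the paper carries out (with $\mathbb{G}_m$ rather than $\mathbb{Q}/\mathbb{Z}(1)$ coefficients), and the identification of the surviving $E_\infty^{1,1}$-term with the kernel of $p_1^*+p_2^*-m^*:\pic(G)\to\pic(G^2)$ is correct. Under the reductivity hypothesis this kernel equals $\pic(G)$, as you say; more precisely it is always $\mathrm{Ext}^1(G,\mathbb{G}_m)$, and the reductive assumption is what forces $\mathrm{Ext}^1(G,\mathbb{G}_m)=\pic(G)$ (this is the actual role of reductivity, not the unit/character bookkeeping you mention---Rosenlicht's lemma handles the $q=0$ row for any smooth connected $G$).

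The genuine gap is your first paragraph. You assert that $\inv(G,\br)_0$ ``embeds into (and in fact equals)'' the reduced part of $\h^2(BG_\bullet,-)$, but you give no mechanism for the surjectivity direction. There is an obvious map $\xi:\h^2(BG_\bullet,\mathbb{G}_m)\to\inv(G,\br)$ coming from pulling back along $[Y|G]^\bullet\to BG_\bullet$ for each torsor $Y/K$, and injectivity is plausible; but why should every normalized invariant arise this way? The simplicial $BG_\bullet$ has no literal generic point, so ``value on the generic torsor'' has no direct meaning here. The paper resolves this by introducing a genuine versal torsor $\pi:E=\mathrm{GL}_n\to X=\mathrm{GL}_n/G$, for which one knows independently (Blinstein--Merkurjev's Theorem~3.4, not the theorem under discussion) that the balanced subgroup $\br(X)_{\mathrm{bal}}$ is isomorphic to $\inv(G,\br)$. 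One then runs the \emph{same} spectral sequence for $[E|G]^\bullet$, uses $\pic(E)=0$ and $\br(E)\hookrightarrow\br(k(E))$ to match the filtrations, and concludes that the comparison map $\tau:\h^2(BG_\bullet,\mathbb{G}_m)\to\br(X)$ lands in $\br(X)_{\mathrm{bal}}$ and is an isomorphism onto it. Without this versal-torsor bridge (or some substitute), your argument establishes $\pic(G)\simeq\h^2(BG_\bullet,\mathbb{G}_m)_0$ but not the link to invariants.
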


\paragraph{{Content of the paper.}}
IWe present a generalization of \cite[Th. 2.4.]{MerkurjevBlinstein} :

\begin{thr}[{Th. \ref{cor inv brauer}}]
\label{thm intro}
Let $G$ be a smooth connected linear algebraic $k$-group over an arbitrary base field $k$. There is a group \emph{isomorphism}, \emph{functorial} in $G$, \[ \ext(G,\mathbb{G}_m) \overset{\sim}{\to} \inv^2(G,\mathbb{Q}/\mathbb{Z}(j) )_0=\inv(G,\br)_0 \] where $\ext(G,\mathbb{G}_m)$ is the group of extensions of $G$ by the multiplicative group $\mathbb{G}_m$ endowed with the Baer sum.
\end{thr}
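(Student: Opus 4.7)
My strategy is to interpret both sides of the claimed isomorphism as identifiable pieces of the \'etale cohomology of the classifying simplicial scheme $BG_\bullet$ (the bar construction, with $BG_n = G^n$), and to compare them through the spectral sequence of this simplicial scheme. The first major step is to use the formalism for \'etale cohomology of sheaves on simplicial schemes developed in the paper's earlier sections to establish a natural identification
\[ \inv^2\bigl(G,\mathbb{Q}/\mathbb{Z}(1)\bigr)_0 \simeq \ker\Bigl(\h^2(BG_\bullet,\mathbb{Q}/\mathbb{Z}(1)) \longrightarrow \h^2(\spec(k),\mathbb{Q}/\mathbb{Z}(1))\Bigr), \]
where the map is pullback along the basepoint $\spec(k) \to BG_\bullet$. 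This is the analogue of the Blinstein-Merkurjev identification, and must be checked to persist without reductivity assumptions once the coefficient sheaf $\mathbb{Q}/\mathbb{Z}(1)$ is taken in Kato's sense (i.e.\ including the $p$-primary logarithmic de Rham-Witt part).

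Next, I would analyze the right-hand side by means of the spectral sequence
\[ E_1^{p,q} = \h^q(G^p,\mathcal{F}) \Longrightarrow \h^{p+q}(BG_\bullet,\mathcal{F}), \]
first for $\mathcal{F} = \mathbb{G}_m$---where the graded pieces of $\h^2$ in low degree consist of $\br(k)$, a subquotient of $\pic(G)$, and the group of normalized cocycles $G \times G \to \mathbb{G}_m$ modulo coboundaries---and then transfer the result to $\mathcal{F} = \mathbb{Q}/\mathbb{Z}(1)$ via Kummer-type sequences supplemented by Kato's $p$-primary complement. The $(2,0)$-graded piece is exactly $\ext(G,\mathbb{G}_m)$ by the classical description of a central extension of a group scheme by $\mathbb{G}_m$ through a normalized $2$-cocycle on the bar construction. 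After removing the $\br(k)$ contribution (which is precisely what the subscript $0$ kills), the piece arising from $2$-cocycles matches $\ext(G,\mathbb{G}_m)$, yielding the desired isomorphism; functoriality in $G$ is built into every step.

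The principal obstacle will be handling the general case uniformly. Blinstein-Merkurjev implicitly reduce the relevant graded piece to $\pic(G)$ via reductivity, which forces every line bundle on $G$ to be translation-equivariant and hence gives $\pic(G) \simeq \ext(G,\mathbb{G}_m)$; for non-reductive $G$ over imperfect fields this identification collapses, and one must isolate the Ext piece directly inside the simplicial cohomology. This requires careful control of the spectral sequence differentials involving $\h^1$ and $\h^2$ of $G^p$, together with the use of the Kato sheaves $\mathbb{Q}/\mathbb{Z}(1)$---rather than only \'etale torsion coefficients---in order to capture $p$-primary phenomena that are otherwise invisible when the base field is imperfect. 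Controlling exactly these phenomena is what is needed for the theorem to extend beyond the reductive, perfect-field setting of the original Blinstein-Merkurjev result.
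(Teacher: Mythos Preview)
Your overall strategy---computing $\h^2(\mathrm{B}G^\bullet,\mathbb{G}_m)$ via the simplicial spectral sequence $E_1^{p,q}=\h^q(G^p,\mathbb{G}_m)\Rightarrow\h^{p+q}(\mathrm{B}G^\bullet,\mathbb{G}_m)$ and reading off $\ext(G,\mathbb{G}_m)$ from the normalized part---is the paper's route. But you have the bookkeeping inverted at the decisive step, and you are missing the main ingredient for the first identification.

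\textbf{The graded piece.} For a smooth \emph{connected} $G$, Rosenlicht's lemma gives $\h^0(G^n,\mathbb{G}_m)=k^\times\oplus\widehat{G}(k)^n$, and an explicit homotopy (the paper's Lemma on the bottom row of the $E_1$ page) shows this complex is acyclic in degrees $\geqslant 2$; hence $E_\infty^{2,0}=0$. So the ``normalized $2$-cocycles $G\times G\to\mathbb{G}_m$'' contribute nothing, contrary to your claim. The group $\ext(G,\mathbb{G}_m)$ sits instead in the $(1,1)$ spot: one finds $E_\infty^{1,1}=\ker\bigl(p_1^\ast+p_2^\ast-m^\ast:\pic(G)\to\pic(G^2)\bigr)$, the group of \emph{primitive} line bundles on $G$. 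That this kernel equals $\ext(G,\mathbb{G}_m)$---and not all of $\pic(G)$, which is precisely the distinction that matters over imperfect fields---is a further argument, handled by invoking the proof of \cite[Th.~5.6]{Colliot_resolutions}. The remaining piece $E_\infty^{0,2}$ is what the basepoint kills, giving the split $\br(k)$ summand.

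\textbf{The identification with invariants.} Your first step, $\inv(G,\br)_0\simeq\h^2(\mathrm{B}G^\bullet,\mathbb{G}_m)/\br(k)$, is not something one merely ``checks persists'': it is the substance of the theorem, and the paper does \emph{not} prove it directly from the simplicial side. Instead the paper fixes a versal torsor $\pi:E=\mathrm{GL}_n\to X=\mathrm{GL}_n/G$, factors the canonical map $\xi:\h^2(\mathrm{B}G^\bullet,\mathbb{G}_m)\to\inv(G,\br)$ as $\Lambda^{\br}\circ\tau$ with $\tau:\h^2(\mathrm{B}G^\bullet,\mathbb{G}_m)\to\br(X)_{\equi}$, invokes the Blinstein--Merkurjev isomorphism $\Lambda^{\br}:\br(X)_{\equi}\overset{\sim}{\to}\inv(G,\br)$ (which holds for all $G$), and then proves $\tau$ is an isomorphism by running the \emph{same} spectral sequence computation for $[E|G]^\bullet$ and comparing the two rows. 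It is this comparison with the versal torsor---absent from your sketch---that actually establishes that every $\br$-invariant arises from $\h^2(\mathrm{B}G^\bullet,\mathbb{G}_m)$.
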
 The group $\pic(G)$ in Blinstein's and Merkurjev's result is replaced by $\ext(G,\mathbb{G}_m)$ (which are the same group under their assumptions), and we emphasize on the functoriality of the isomorphism, which comes easily and very naturally in our case. The major improvement is that our theorem applies to \emph{all} smooth and connected linear algebraic groups over \emph{any} base field, especially to unipotent and pseudo-reductive groups over any imperfect field. The problem with the latter groups is that they are not rational (or even unirational) over the separable closure of the base field, whereas most known results about linear algebraic groups deal with groups which become rational in this situation.

We discuss Theorem \ref{thm intro} and derive some consequences in Section \ref{sect consequences}. The main byproduct lies in Proposition \ref{prop inv classiques}, which states that $\br$-invariants of $G$ as in the theorem are exactly the ones that one can easily construct from taking long exact Galois cohomology sequence associated to group extensions of $G$ by $\mathbb{G}_m$.

An other purpose of the paper is more generally to rethink construction of invariants via the \emph{simplicial classifying space} (see Section \ref{section presentation}) and we hope this will be useful for invariants of non-reductive groups other than $\br$-invariants.

\paragraph{{Plan.}} In Section \ref{section preliminaires} we recall some usual results and definitions that will be useful for the other sections.

In Section \ref{section presentation} we introduce the notions of cohomological invariants and ideas related to study invariants. In particular we recall two ways to construct cohomological invariants to be compared together : constructions via two notions of \emph{classifying spaces} for torsor of a given group.

In Section \ref{section invariants degre 2}, we carry out the calculations to prove Theorem \ref{thm intro} based on the general ideas from Section \ref{section presentation} and tools provided in Section \ref{section preliminaires}.

Finally, in Section \ref{sect consequences} we give some consequences of Theorem \ref{thm intro}.

\paragraph{{Some notation.}} Let's collect here some notation to be used throughout the paper.

\begin{itemize}
\item The letter $k$ will always denote the base (commutative) field over which all objects in the paper are defined. There is \textbf{no} restriction on $k$ : it mays be finite or infinite, perfect or not and of any characteristic.
\item The letters $K$, $L$ and $k^\prime$ will denote other variable fields; they are often extension of the base field $k$.
\item Usual categories are underlined. We will mainly encounter \begin{itemize}
	\item $\underline{Fields/K}$ the category of field extensions of $K$;
	\item $\underline{Sets}$, the category of sets;
	\item $\underline{Ab}$ the category of Abelian groups.
\end{itemize}
\item For simplicity, when a functor $H$ is given by an explicit expression, we shall write $H(\star)$, where $\star$ stands for the variable from the source category (for instance, if $H : \underline{Fields/K} \to \underline{Ab}$ is the degree $d$ Galois cohomology group of some module $M$, we will write $\h^d(\star,M)$ for $H$).
\item We will almost exclusively work with \'etale cohomology (on schemes or simplicial schemes) so we won't precise "\'etale" to express that cohomology group are \'etale cohomology groups. Galois cohomology being a special case of \'etale cohomology we will also just write $\h^d(K,M)$ for Galois cohomology groups of $k$ with coefficients in $M$.
\end{itemize}

\paragraph{Acknowledgments.} The present paper is the main part of the author's PhD thesis done at Institut Camille Jordan (Lyon 1 University, France). The author is deeply thankful to Philippe Gille, who has been a wonderful benevolent and pleasant supervisor. It had been an honor to be his student.

\section{Some preliminaries}
\label{section preliminaires}

We recall in this section the main general results and notions to be used in Sections \ref{section presentation} and \ref{section invariants degre 2}. First, Subsection \ref{sous section simplicial} recalls simplicial schemes, then in Subsection \ref{sous section Gersten} we present the \'etale sheaves $\mathbb{Q}/\mathbb{Z}(j)$ and compare the cohomology group $\h^2(X,\mathbb{Q}/\mathbb{Z}(1))$ to the Brauer group $\br(X)$ for some "nice" schemes $X$ over a field.

	\subsection{Simplicial schemes}
	\label{sous section simplicial}
	
We refer to \cite{Deligne_Hodgeiii} and chiefly to \cite{Friedlander_homotopy} for explanations and a detailed exposition on simplicial objects/schemes, Grothendieck topologies and sheaves on them, and their cohomology.
	
Briefly recall that a simplicial $k$-scheme is a familly $X^\bullet=\{ X^{(n)} \}_{n \geqslant 0}$ of $k$-schemes together with $k$-scheme maps $\delta_i^n : X^{(n)} \to X^{(n-1)}$ and $s_i^n:X^{(n)} \to X^{(n+1)}$ ($n \geqslant 0$, and $i=0,\cdots,n$) that satify the usual identities. The $\delta_i^n$'s are called the \emph{face} maps and will be mainly written $\delta_i$; the $s_i^n$'s are called the \emph{degeneracy} maps and will be mainly written $s_i$

One has a notion of morphism between simplicial $k$-schemes and hence a category of simplicial $k$-schemes.

Also, given a sheaf $\mathcal{F}$ on $\underline{Sch/k}$ for the \'etale topology, one can consider $\mathcal{F}$ a an \'etale sheaf on the (small or big) \'etale site of any simplicial $k$-scheme $X^\bullet$. Hence \'etale cohomology groups $\h^d(X^\bullet,\mathcal{F})$ are well defined for all $d \geqslant 0$. In this paper we will be only interested in the sheaf $\mathbb{G}_m$ given by the multiplicative group.

\begin{prop}[{\cite[Prop. 2.4.]{Friedlander_homotopy}}]
\label{Suite spectrale simpliciale}
Let $X^\bullet$ be a simplicial $k$-scheme with face maps $\delta_i$ and let $\mathcal{F}$ be a sheaf on the big \'etale site of $\spec(k)$. One has a first quadrant spectral sequence \[ E^{p,q}_1= \h^q(X^{(p)},\mathcal{F}) \Rightarrow \h^{p+q}(X^{\bullet},\mathcal{F}), \] where the differentials $E_1^{p,q} \to E^{p+1,q}_1$ are $d^{p,q}=\sum_{i=0}^p (-1)^i \delta_i ^\ast$.

Moreover the spectral sequence and its convergence are natural in $X^\bullet$ and in $\mathcal{F}$.
\end{prop}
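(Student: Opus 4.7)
The plan is to realize $\h^d(X^\bullet, \mathcal{F})$ as the total cohomology of a first-quadrant bicomplex whose $E_1$ page displays the groups $\h^q(X^{(p)}, \mathcal{F})$, and then to read off the desired spectral sequence by first taking vertical cohomology of this bicomplex. The construction combines an injective resolution of $\mathcal{F}$ (for the cohomological direction) with an interpretation of global sections on $X^\bullet$ via a simplicial cochain (for the simplicial direction).

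Concretely, I would first pull $\mathcal{F}$ back to an abelian sheaf $\mathcal{F}^\bullet$ on the étale site of $X^\bullet$, so that $\mathcal{F}^{(p)} = \mathcal{F}|_{X^{(p)}}$ with transition maps induced by the $\delta_i$ and $s_i$. Next I would choose a resolution $\mathcal{F}^\bullet \to \mathcal{I}^{\bullet, \bullet}$ whose restriction to each $X^{(p)}$ is a $\Gamma$-acyclic resolution of $\mathcal{F}|_{X^{(p)}}$; this can be built by iterating a Godement-style functor on the topos of sheaves on $X^\bullet$ which is compatible with level-wise restriction. Then I would use the general fact that for an abelian sheaf $\mathcal{G}^\bullet$ on $X^\bullet$ the natural cochain
$$0 \to \Gamma(X^\bullet, \mathcal{G}^\bullet) \to \Gamma(X^{(0)}, \mathcal{G}^{(0)}) \to \Gamma(X^{(1)}, \mathcal{G}^{(1)}) \to \cdots,$$
with differentials $\sum_{i=0}^{p+1} (-1)^i \delta_i^\ast$, is exact whenever $\mathcal{G}^\bullet$ is injective. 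Applying this row by row to $\mathcal{I}^{\bullet, \bullet}$ produces a first-quadrant bicomplex $C^{p,q} = \Gamma(X^{(p)}, \mathcal{I}^{(p), q})$ whose total complex computes $\h^\ast(X^\bullet, \mathcal{F})$; filtering by $p$ and first taking vertical cohomology yields $E_1^{p,q} = \h^q(X^{(p)}, \mathcal{F})$, and the remaining horizontal differential reads as $\sum_i (-1)^i \delta_i^\ast$ by construction.

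The main obstacle is the technical input that a resolution produced on $X^\bullet$ restricts to a $\Gamma$-acyclic resolution on each $X^{(p)}$, since plain injectivity is not preserved by pullback to individual faces; this forces the use of a Godement or Cartan-Eilenberg construction tailored to the topos of sheaves on $X^\bullet$. Once this is in hand, naturality in $X^\bullet$ and in $\mathcal{F}$ follows from the essential uniqueness up to chain homotopy of such resolutions, which passes through the total complex and its filtration and yields morphisms of spectral sequences.
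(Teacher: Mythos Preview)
The paper does not prove this proposition; it is quoted from \cite[Prop.~2.4]{Friedlander_homotopy} without argument, so there is nothing in the paper to compare against. Your bicomplex construction via a level-wise $\Gamma$-acyclic (Godement-type) resolution on the simplicial topos is the standard route to this spectral sequence and is correct.
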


\begin{ex} Here are specific simplicial schemes built from common schemes. They will be useful in the next sections.
 
\begin{enumerate}
\item To any $k$-scheme $X$ one easily associates a simplicial $k$-scheme : the \emph{constant} simplicial scheme $\mathrm{C}_k X^\bullet$ given by the family $\mathrm{C}_k X^{(n)}:=X$ and whose face and degeneracy maps are the identity map of $X$.
\item To any $k$-group scheme, one can also associate the simplicial $k$-scheme $\mathrm{E} G^\bullet$given by the family $\mathrm{E} G ^{(n)}:=\underset{n+1 \text{ terms}}{\underbrace{G \times_k \cdots \times_k G}}$ and whose face maps are projections \[ \delta_i : \left\lbrace \begin{array}{ccc}
\mathrm{E} G ^{(n)} & \to & \mathrm{E} G ^{(n-1)} \\
(g_0,\cdots,g_n) & \mapsto & (g_0,\cdots,g_{i-1},g_{i+1},\cdots,g_n)
\end{array} \right. \] and degeneracy maps \[ s_i : \left\lbrace\begin{array}{ccc}
 \mathrm{E} G ^{(n)} & \to & \mathrm{E} G ^{(n+1)} \\
 (g_0,\cdots,g_n) & \mapsto & (g_0,\cdots,g_{i-1},1,g_i,\cdots,g_n)
\end{array} \right. .\]
\end{enumerate}
\end{ex}

	\subsection{(Complex of) Sheaves $\mathbb{Q}/\mathbb{Z}(j)$}
	\label{sous section Gersten}
	
We state the most important results satified by these sheaves and needed in the paper. There is no clear reference for these results, especially for schemes over an imperfect fields\footnote{For instance, we found that in \cite[App. A-V]{MerkurjevBlinstein} a reference is missing for Propositions A-10 and A-11 for schemes over an imperfect field.}. Arguments are known by specialists but may need to be unified, so we refer to \cite{Lourdeaux_faisceaux} for a (too much) detailed exposition of this subsection which is too long to be presented here.

FLet $j \geqslant 0$. For any prime field $k_0$, one defines the complexe of sheaves $\mathbb{Q}/\mathbb{Z}(j)$ on the \emph{big} \'etale site of $k_0$. It is the sum of \begin{itemize}
\item the complexes, concentrated in degree $0$, $\underset{\longrightarrow \; n}{\mathrm{Lim}} \; \mu_{l^n}^{\otimes j}$ for every prime $l \neq \mathrm{char} (k_0)$ where $\mu_{l^n}$ denotes the sheaf of $l^n$-roots of unity;
\item and the complex $\underset{\longrightarrow \: n}{\mathrm{Lim}} \; \mathrm{W}_n \Omega^j_{log} [-j]$ where $\mathrm{W}_n \Omega^j_{log}$ is the logarithmic Hodge-Witt sheaf (\cite[I,5.7.]{Illusie}).
\end{itemize}
\begin{nota}
For all $\mathbb{F}_p$-scheme $X$, denote by $\mathcal{H}^d_{(X)}(\mathbb{Q}/\mathbb{Z}(j))$ the sheafification of the presheaf $U \mapsto \h^d(U,\mathbb{Q}/\mathbb{Z}(j))$ on the small Zariski site of $X$.
\end{nota}

Remember the base field $k$.

\begin{thm}[{\cite[Cons. 1.14.]{Lourdeaux_faisceaux}}]
\label{csq gersten}
Let $X$ be an irreducible, smooth $k$-scheme of finite type or let $X$ be a a local ring of such a scheme. Then for all $d$, $j \geqslant 0$, there is an exact sequence, contravariant with respect to flat morphisms, \[0 \to \h^0_\mathrm{Zar} \left( X,\mathcal{H}^d_{(X)}(\mathbb{Q}/\mathbb{Z}(j)) \right) \to \h^d (K(X),\mathbb{Q}/\mathbb{Z}(j)) \to \bigoplus_{x \in X^{(1)}} \h^d_x(X,\mathbb{Q}/\mathbb{Z}(j)) . \]
\end{thm}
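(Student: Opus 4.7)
The plan is to realize the sequence as the first three terms of a Gersten-type resolution of the Zariski sheaf $\mathcal{H}^d_{(X)}(\mathbb{Q}/\mathbb{Z}(j))$ associated to the coniveau filtration on $X$, then to extend the statement from smooth $k$-schemes of finite type to local rings of such schemes by a limit argument.

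First, I would decompose the coefficients. By construction, $\mathbb{Q}/\mathbb{Z}(j)$ is the direct sum, indexed by primes $\ell$, of the complexes $\varinjlim_n \mu_{\ell^n}^{\otimes j}$ for $\ell\neq \mathrm{char}(k)$, together with (in characteristic $p>0$) the shifted logarithmic Hodge--Witt complex $\varinjlim_n \mathrm{W}_n\Omega^j_{\log}[-j]$. Since all three functors occurring in the target sequence commute with direct sums and with filtered colimits of coefficient sheaves on the noetherian schemes in play, it suffices to establish the analogous exact sequence for each of $\mu_{\ell^n}^{\otimes j}$ and $\mathrm{W}_n\Omega^j_{\log}$ separately.

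For the prime-to-$p$ pieces, $\mu_{\ell^n}^{\otimes j}$ is a locally constant constructible \'etale sheaf on the smooth $k$-scheme $X$, and Bloch--Ogus furnishes a Gersten-type resolution of $\mathcal{H}^d$; extracting the first three terms and identifying the stalk at the generic point with $\h^d(K(X),\mu_{\ell^n}^{\otimes j})$, respectively the cokernel contribution at codimension-one points with $\h^d_x(X,\mu_{\ell^n}^{\otimes j})$, yields the exact sequence in that case. For the $p$-primary piece in positive characteristic, I would invoke Gros--Suwa's Gersten conjecture for $\mathrm{W}_n\Omega^j_{\log}$ on smooth schemes, together with its extension to the case of an imperfect base field carried out in \cite{Lourdeaux_faisceaux}. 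This gives the same exact sequence for the $p$-part, and assembling the two yields the result when $X$ is a smooth $k$-scheme of finite type.

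To pass from this to the case where $X$ is the local ring of a smooth $k$-scheme $Y$ at a point $y$, I would write $X=\varprojlim_U U$ over affine open neighborhoods of $y$ in $Y$, and use the compatibility of the Gersten sequence with flat pullbacks, together with the commutation of both \'etale cohomology and the global Zariski sections of $\mathcal{H}^d_{(-)}(\mathbb{Q}/\mathbb{Z}(j))$ with such inverse limits of schemes with affine transition maps. Contravariance with respect to flat morphisms then follows directly from the functoriality of the Bloch--Ogus and Gros--Suwa constructions. The main obstacle is really the $p$-primary part over an imperfect base: the classical purity statements for $\mathrm{W}_n\Omega^j_{\log}$ are usually proved under a perfectness hypothesis, and verifying the exactness of the Cousin complex at codimension-one points without it requires the careful local purity analysis that is the heart of \cite{Lourdeaux_faisceaux}.
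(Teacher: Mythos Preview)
The paper does not prove this theorem: it is stated as an import from \cite[Cons.~1.14]{Lourdeaux_faisceaux}, with the surrounding text explicitly saying that the detailed exposition is too long to include and is deferred to that companion reference. So there is no ``paper's own proof'' to compare against here.

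That said, your sketch is the expected strategy and matches what the companion paper presumably does: split $\mathbb{Q}/\mathbb{Z}(j)$ into its prime-to-$p$ and $p$-primary parts, invoke Bloch--Ogus for the former and the Gros--Suwa Gersten resolution for logarithmic Hodge--Witt sheaves for the latter, then pass to local rings by a limit argument. You correctly flag the genuine content as lying in the $p$-primary part over an imperfect base, where the standard purity inputs are usually stated under a perfectness hypothesis; this is precisely the point the present paper signals by citing \cite{Lourdeaux_faisceaux} rather than reproducing the argument. One small wording issue: the exact sequence you want is not quite ``the first three terms of a Gersten resolution'' --- the Gersten/Cousin complex has $\bigoplus_{x\in X^{(0)}}\h^d(\kappa(x),-)$ in degree $0$ and $\bigoplus_{x\in X^{(1)}}\h^{d+1}_x(-)$ (or a shifted version) in degree $1$, whereas the sequence here records that $\mathcal{H}^d$ is the kernel of the map from the generic-point term to the codimension-one supported cohomology. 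The identification of the second map with the localization map into $\bigoplus_{x\in X^{(1)}}\h^d_x$ (rather than $\h^{d+1}_x$) deserves a word, but this is a cosmetic point in an otherwise sound outline.
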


The notation in Theorem \ref{csq gersten} are the following : \begin{itemize}
\item $K(X)$ is the fraction field of $X$;
\item $X^{(1)}$ is the set of codimension $1$ points of $X$;
\item for $x \in X$, $\h^d_x (X,\mathcal{F})$ is the inductive limit $\underset{\longrightarrow}{\mathrm{lim}} \; \h^d_{U \cap \overline{\{ x \} }} (U, \mathcal{F}_{\mid U})$ where $U$ runs over all Zariski-opens of $X$ containing $x$.
\end{itemize}

The degree $2$ cohomology of $\mathbb{Q}/\mathbb{Z}(1)$ can be compared to the Brauer group.  First of all, for a field $K/\mathbb{F}_p$, there is a classical identification $\h^2(K,\mathbb{Q}/\mathbb{Z}(1))=\br(K)$. More generally, as we explain with (too much) details in \cite{Lourdeaux_faisceaux}, for all regular scheme $X$ over $\mathbb{F}_p$, there is a functorial homomorphism \begin{equation}
\label{morphisme brauer}
\h^2(X,\mathbb{Q}/\mathbb{Z}(1)) \to \h^2(X,\mathbb{G}_m) 
\end{equation} which is an isomorphism when $X$ is the spectrum of a local ring which is regular over $\mathbb{F}_p$. From \eqref{morphisme brauer} we deduce a functorial homomorphism for all Noetherian, irreducible and regular $\mathbb{F}_p$-scheme $X$ : \begin{equation}
\h^0_\mathrm{Zar} \left( X,\mathcal{H}^d_{(X)}(\mathbb{Q}/\mathbb{Z}(j)) \right) \to \h^2(X,\mathbb{G}_m).
\end{equation} Then, from Theorem \ref{csq gersten} and a similar property for the Brauer group of "nice" schemes (\cite[Th. 1.2]{Cesnavicius}) :

\begin{thm}[{\cite[Th. 2.1.]{Lourdeaux_faisceaux}}]
\label{thm isom brauer}
There is a functorial isomorphism \[ \h^0_\mathrm{Zar} \left( X,\mathcal{H}^2_{(X)}(\mathbb{Q}/\mathbb{Z}(1)) \right) \overset{\sim}{\longrightarrow} \br(X) \] for all irreducible, smooth $k$-scheme of finite type $X$ and also for $X$ which is a local ring of such a scheme.
\end{thm}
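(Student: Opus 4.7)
The plan is to identify both sides with the kernel of a residue-at-codimension-one map via two parallel Gersten-type exact sequences, and to deduce the isomorphism from a diagram chase. The top sequence will be Theorem \ref{csq gersten} applied with $d = 2$ and $j = 1$, while the bottom sequence will be the analogue for the Brauer group provided by \cite[Th. 1.2]{Cesnavicius}. The comparison between the two is supplied by the functorial morphism \eqref{morphisme brauer}.

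\textbf{Execution.} First I would set up the commutative ladder whose first row is the exact sequence of Theorem \ref{csq gersten} for $(d,j)=(2,1)$, whose second row is the Brauer-theoretic Gersten sequence of \cite[Th. 1.2]{Cesnavicius}, and whose vertical arrows $\varphi$, $\alpha$, $\beta$ are induced by \eqref{morphisme brauer} on global sections, at the generic point, and at each codimension one point respectively. The middle arrow $\alpha \colon \h^2(K(X), \mathbb{Q}/\mathbb{Z}(1)) \to \br(K(X))$ is an isomorphism by the classical identification over a field recalled just before the statement. For the right vertical arrow $\beta$, each summand $\h^2_x(X, -)$ can, via excision, be described in terms of the local ring $\mathcal{O}_{X,x}$; since $X$ is smooth over $k$, this local ring is regular over the prime field, and \eqref{morphisme brauer} is known to be an isomorphism on such local rings. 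This should force $\beta$ to be injective, and a four-lemma then yields that $\varphi$ is an isomorphism. When $X$ is itself a local ring of a smooth $k$-scheme of finite type, the claim follows directly from the local-case statement of \eqref{morphisme brauer}.

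\textbf{Main obstacle.} The principal technical point will be the commutativity of the right square of the ladder, i.e.\ the matching of the residue differential of Theorem \ref{csq gersten} with the usual Brauer residue at a codimension one point. This is the naturality of \eqref{morphisme brauer} with respect to localisation, which one has to verify separately on the prime-to-$p$ part of $\mathbb{Q}/\mathbb{Z}(1)$, where it is governed by Kummer theory, and on the $p$-primary part involving the logarithmic Hodge--Witt sheaves $W_n\Omega^1_{\log}$, which is the more delicate piece. Functoriality of the construction should in the end reduce both cases to the local ring situation, where \eqref{morphisme brauer} is already an isomorphism.
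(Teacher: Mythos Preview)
Your approach is correct and coincides with the one the paper indicates: the paragraph preceding the theorem explicitly says the result follows from Theorem~\ref{csq gersten} together with the analogous Brauer-group Gersten property from \cite[Th.~1.2]{Cesnavicius}, compared via the morphism~\eqref{morphisme brauer}. Your ladder-and-four-lemma argument is exactly this comparison made precise, and the obstacle you single out (compatibility of the two residue maps, especially on the $p$-primary part) is the genuine technical point, which the author defers to \cite{Lourdeaux_faisceaux}.
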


\section{Cohomological invariants : presentation}
	\label{section presentation}

In Subsection \ref{sous section torseurs classifiant} we introduce two models of classifying spaces for an algebraic group $G$ over a field $k$. The first model is schematic and is explained in \cite{GMS} and extensively used by Merkurjev and al. We will call it here \emph{versal torsor}. The second model is a simplicial scheme and came to us from \cite{Deligne_Hodgeiii} and its use in \cite{EKLV}

In Subsection \ref{sous section invariants generalites} cohomological invariants are introduced in their generality (\cite{GMS}). Then it is seen how classifying spaces from \ref{sous section torseurs classifiant} defined cohomological invariants. The construction via a versal torsor comes from \cite{MerkurjevBlinstein}. The construction vie the simplicial model have already been consider in \cite{EKLV} to define Rost invariant and we kind of rehabilitate it here.

		\subsection{Torsors and classifying torsors}
		\label{sous section torseurs classifiant}

Let $G$ be a smooth linear algebraic $k$-group. The group $G$ is fixed for this subsection.

For any field extension $K/k$, one has non abelian Galois cohomology sets $\h^1(K,G)=\h^1_\text{Gal}(K,G)$ ($=\h^1_{\text{\'et}}(K,G)$ since $G$ is smooth) - see \cite[\S 29.]{KMRT}. The set $\h^1(K,G)$ classifies the set of isomorphism classes of $G$-torsor over $K$ (\cite[I.\S 5, Prop. 33.]{Serre_CohomologieGaloisienne}). There is a distinguished element of $\h^1(K,G)$, namely the class of the trivial $G$-torsor, which corresponds to the trivial cocycle or to the isomorphism class of the torsor $G_K/K$ on which $G$ acts by right multiplication. For a $G$-torsor $Y/K$ over a field $K/k$, let's write $[Y/K]$ for the isomorphism class of $Y/K$ as a $G$-torsor : $[Y/K] \in \h^1(K,G)$.

We will consider the functor \[ \h^1(\star,G) : \left\lbrace \begin{array}{ccc}
\underline{Fields/k} & \to & \underline{Sets} \\
K/k & \mapsto & \h^1(K,G)
\end{array} \right. .\]

			\subsubsection{Versal torsor}
			
We call \emph{versal torsor of $G$} or \emph{versal $G$-torsor} a $G$-torsor $\pi : E \to X$, where $X$ is a smooth $k$-scheme, that satisfies the following property : for any field extension $K/k$ with $K$ \emph{infinite} (as a set), and any $G$-torsor $Y/K$, there exists a $K$-point of $X$, $x : \spec(K) \to X$, such that $Y/K$ is isomorphic to the pullback $x^{-1}(\pi)$ as $G$-torsors.

A pratical version of a versal $G$-torsor is given by the quotient map $\pi : \mathrm{GL}_n \to \mathrm{GL}_n/G$ for any faithful linear representation $G \hookrightarrow \mathrm{GL}_n$ (we refer to \cite[\S 5.3.]{GMS}). In this article we will only consider such versal $G$-torsor and use the notation $\pi : E \to X$, where $E=\mathrm{GL}_n$ and $X=\mathrm{GL}_n/G$. Also we will often consider the generic fiber of $\pi$ : it is the $G$-torsor $\pi_{\text{gen}} : E_\text{gen} \to \spec(k(X))$. Roughly speaking it is the most general or complicated $G$-torsor over a field.

\begin{rem}
When $G$ is moreover connected, then even for finite fields $K$, the condition that every $G$-torsor over $K$ is the pullback of a versal $G$-torsor $\pi : E \to X$ is satisfied when $E(k) \neq \emptyset$ since all torsors over a finite fields are trivial for smooth and connected (linear) algebraic groups (see \cite{Lang_finite}).
\end{rem}

			\subsubsection{Simplicial classifying torsor}
		\label{sous section torseur simplicial}

We follow here \cite[\S 6]{Deligne_Hodgeiii}.

Consider the simplicial $k$-scheme $\e G^\bullet$ determined by $\e G^{(n)}= G^{n+1}=\underset{n+1 \text{ terms}}{\underbrace{G \times_k \cdots \times_k G}}$ with degeneracy maps \[ s_i : (g_0, \cdots,g_n) \mapsto (g_0,\cdots,g_{i-1},1,g_i,\cdots,g_n) \] and face maps \[ \delta_i : (g_0, \cdots,g_n) \mapsto  (g_0,\cdots,g_{i-1},g_{i+1},\cdots,g_n) . \] Letting $G$ act on the $\e G^{(n)}$'s by right multiplication), the face and degeneracy maps are $G$-equivariant and by quotient one gets a simplicial $k$-scheme $\mathrm{B} G^\bullet$ where $\mathrm{B} G^{(n)}= G^{n+1}/G$. The map $\e G^\bullet \to \mathrm{B} G^\bullet$ is a (simplicial) $G$-torsor, in the sense that each $\e G^{(n)}$ is a $G$-torsor over $\mathrm{B} G^{(n)}$, and the face and degeneracy maps are morphisms of $G$-torsors. The simplicial $k$-scheme $\mathrm{B} G^\bullet$ is the simplicial model of \emph{the classifying space} for $G$. We will call it the \emph{simplicial classifying space of $G$}.

In the same fashion, for a $G$-torsor $Y \to Z$, consider the simplicial $k$-scheme $[Y]^{\bullet}$ determined by the schemes $[Y]^{(n)}=G^{n+1} \times_k Y =\underset{n+1 \text{ terms}}{\underbrace{G \times_k \cdots \times_k G}} \times_k Y$, with degeneracy maps \[ s_i : (g_0, \cdots,g_n,y) \mapsto (g_0,\cdots,g_{i-1},1,g_i,\cdots,g_n,y) \] and face maps \[ \delta_i : (g_0, \cdots,g_n,y) \mapsto  (g_0,\cdots,g_{i-1},g_{i+1},\cdots,g_n,y) . \] Letting $G$ act on $[Y]^{(n)}$, the maps $s_i$ and $\delta_i$ are $G$-equivariant. By taking quotients with respect to the $G$-actions, one gets a simplicial $k$-scheme $[Y | G]^{\bullet}$ where $[Y |G]^{(n)}=( G \times_k \cdots \times_k G \times_k Y )/G$. 

\vspace{\baselineskip}

Let $\mathcal{F}$ be a \'etale sheaf on the big site of $\spec(k)$ and let $Y \to Z$ be a $G$-torsor. From the definition of torsors, it comes that $[Y | G]^{(n)}$ is isomorphic to $\underset{n+1 \text{ terms}}{\underbrace{Y \times_Z \cdots \times_Z Y}}$. Considering $Z$ as the constant simplicial $k$-scheme $C_k Z^\bullet$ and using projections on $Z$, one finds a map $[Y|G]^\bullet \to C_k Z^\bullet$. Whence a canonical homomorphism between cohomology groups : \begin{equation}
\label{morphisme canonique cohomologie torseur simplicial}
\h^d(Z,\mathcal{F}) \simeq \h^d(C_k Z^\bullet,\mathcal{F}) \to \h^d([Y | G ]^\bullet,\mathcal{F}), \: \forall d \geqslant 0.
\end{equation} 

\begin{prop}[{\cite[Prop. A.3. \& Rem. A.4.]{EKLV}}]
\label{iso cohomologie simplicial} 
The natural homomorphism \eqref{morphisme canonique cohomologie torseur simplicial} is an isomorphism.
\end{prop}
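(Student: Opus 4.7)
The plan is to apply Friedlander's spectral sequence (Proposition \ref{Suite spectrale simpliciale}) to both $[Y|G]^\bullet$ and $C_k Z^\bullet$ and compare them via the morphism $[Y|G]^\bullet \to C_k Z^\bullet$ that induces \eqref{morphisme canonique cohomologie torseur simplicial}. For the constant simplicial scheme $C_k Z^\bullet$, all face maps are the identity, so the differentials $d_1^{p,q}=\sum_{i=0}^{p}(-1)^i\delta_i^\ast$ alternate between $0$ and the identity. Hence $E_2^{0,q}=\h^q(Z,\mathcal{F})$ and $E_2^{p,q}=0$ for $p>0$, which recovers the standard identification $\h^d(C_k Z^\bullet,\mathcal{F})\simeq\h^d(Z,\mathcal{F})$.

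For $[Y|G]^\bullet$, I would start from the identification already pointed out in the text: $[Y|G]^{(n)}\simeq Y\times_Z\cdots\times_Z Y$ with $n+1$ factors, the face maps being the projections and the degeneracy maps the diagonals. Augmented over $Z$, this is precisely the \v{C}ech nerve of the morphism $Y\to Z$, which is a surjective \'etale cover since $Y\to Z$ is a $G$-torsor and $G$ is smooth. Fixing $q$, the row $E_1^{\ast,q}$ of Friedlander's spectral sequence is then, by construction, the \v{C}ech cochain complex of the cover $Y\to Z$ with coefficients in the presheaf $U\mapsto\h^q(U,\mathcal{F})$, whence $E_2^{p,q}=\check{H}^p\bigl(Y\to Z,\,\underline{\h}^q(\mathcal{F})\bigr)$.

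The crucial step is to identify the abutment with $\h^{p+q}(Z,\mathcal{F})$ in a way compatible with \eqref{morphisme canonique cohomologie torseur simplicial}. This is exactly a statement of cohomological descent for the \'etale cover $Y\to Z$: the resulting spectral sequence coincides with the classical \v{C}ech-to-derived-functor spectral sequence converging to $\h^{p+q}(Z,\mathcal{F})$. One can either verify the match directly, comparing Friedlander's simplicial differentials with the \v{C}ech differentials term by term, or invoke Verdier's cohomological descent theorem (SGA 4, Exp. V bis) for \'etale hypercovers, of which the augmented \v{C}ech nerve of a cover is the prototypical example. The map on $E_2$-pages induced by $[Y|G]^\bullet\to C_k Z^\bullet$ is then the canonical edge morphism, which in total degree $d$ recovers the homomorphism \eqref{morphisme canonique cohomologie torseur simplicial} and is an isomorphism at the level of abutments.

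The main obstacle is the bookkeeping in identifying Friedlander's simplicial spectral sequence for the \v{C}ech nerve with the classical \v{C}ech-to-derived-functor spectral sequence functorially in both $\mathcal{F}$ and the cover $Y\to Z$. Both formalisms produce the same $E_2$-term, but checking that the differentials on higher pages agree (so that the two abutments are genuinely identified via the comparison map) is a technical verification; once established, the result follows immediately.
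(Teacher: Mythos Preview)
The paper does not give its own proof of this proposition; it simply records the statement with a citation to \cite[Prop.~A.3 \& Rem.~A.4]{EKLV}. So there is no in-paper argument to compare against, only the cited reference.

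Your approach is correct in substance and is the standard one: recognising $[Y|G]^\bullet$ as the \v{C}ech nerve of the \'etale cover $Y\to Z$ (smoothness of $G$ is used here), and then invoking cohomological descent for \'etale (hyper)covers. This is precisely the content of the EKLV reference and of Verdier's theorem you cite. A small streamlining: rather than matching two spectral sequences page by page, it is cleaner to note that once you have identified $[Y|G]^\bullet \to C_k Z^\bullet$ with the augmented \v{C}ech nerve of an \'etale cover, the statement \emph{is} cohomological descent---the spectral-sequence comparison is one proof of descent, but you can equally argue that \'etale-locally on $Z$ the torsor $Y\to Z$ has a section, the augmented simplicial object then acquires an extra degeneracy (a simplicial homotopy retraction onto $Z$), and \'etale cohomology is local for the \'etale topology. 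This avoids the ``bookkeeping'' you flag in your last paragraph and makes the functoriality in $\mathcal{F}$ and in $Y\to Z$ transparent.
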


\begin{rem}
In general the torsor $Y \to Z$ has no local section with respect to the Zariski topology, but it has local sections with respect to the \'etale topology (here $G$ is smooth). It is important to use \'etale cohomology in Proposition \ref{iso cohomologie simplicial} and not Zariski cohomology.
\end{rem}

The simplicial sheme $\mathrm{B} G^\bullet$ is called "classifying" because, as for versal torsor, every torsor is can be recover from a "point" of $\mathrm{B}G^\bullet$ : for all $G$-torsor $Y \to Z$, there is a natural map ${\Phi}_{Y/Z} : [Y]^\bullet \to \e G^\bullet$ determined by \[ \begin{array}{ccc}
[Y]^{(n)} & \to & \e G^{(n)} \\
(g_0,\cdots,g_n,y) & \mapsto & (g_0,\cdots,g_n)
\end{array} \] and which is compatible to the action of $G$, so the map ${\Phi}_{Y/Z}$ induces \begin{equation}
\label{morphisme induit simplicial}
\phi_{Y/Z} : [Y|G]^\bullet \to \mathrm{B} E^\bullet .
\end{equation}

\begin{rem}
Let $\mathcal{F}$ be a sheaf on the big \'etale site of $\spec(k)$. For all $d \geqslant 0$ the map $\phi_{Y/Z} : [Y|G]^\bullet \to \mathrm{B} E^\bullet$ induces \[ \h ^d (\mathrm{B} G^\bullet,\mathcal{F}) \to \h^d([Y|G]^\bullet,\mathcal{F}) \underset{\text{prop \ref{iso cohomologie simplicial}}}{\simeq} \h^d(Z,\mathcal{F}) \] which is natural in $G$, in $Y \to Z$ and in $\mathcal{F}$.
\end{rem}

\begin{rem}
\label{remarque foncto}
Let $\sigma : G \to H$ be a group homomorphism between smooth and connected linear algebraic $k$-groups. The homomorphism $\sigma$ naturally induces map $\mathrm{B} G^\bullet \mapsto \mathrm{B} H^\bullet$, whence a functor $\mathrm{B} \star^\bullet : G \to \mathrm{B} G^\bullet$ from the category of smooth and connected linear algebraic $k$-group into the category of simplicial $k$-schemes.

Also, for any $G$-torsor $Y \to Z$ there is a morphism $[Y|G]^\bullet$ into $[Y_H|H]^\bullet$ where $Y_H$ is the $H$-torsor $Y_H \to Z$ obtained by pushing forward with respect to $\sigma$. For all \'etale sheaf $\mathcal{F}$ on $\underline{Sch/k}$, the defined map thus induces an isomorphism for all degree $d$, \[ \h^d([Y|G]^\bullet , \mathcal{F}) \overset{\sim}{\to} \h^d([Y_H|H]^\bullet ,\mathcal{F}) \] which is compatible with the isomorphisms between these two groups and $\h^d(Z,\mathcal{F})$ (\ref{iso cohomologie simplicial}).
\end{rem}

		\subsection{Generalities}
		\label{sous section invariants generalites}
		
Let again $G$ be a smooth algebraic $k$-group, which is fixed for all this subsection.

Recall the functor $\h^1(\star,G) : \underline{Fields/k} \to \underline{Ab}$.

\begin{defi}
Let $H$ be a functor from $\underline{Fields/k}$ into $\underline{Ab}$. A \emph{$H$-invariant of $G$}, or an \emph{invariant of $G$ with value in $H$} is a functor morphism (also called a natural transformation) $\lambda$ from the functor $\h^1(\star,G)$ into the functor $H$. \\ When $H$ is defined as the degree $d$ Galois cohomology group of a discrete Galois module $M$ for $k$ (\textit{i.e.} $\h=\h^d(\star,M)$), a $H$-invariant is also called a \emph{degree $d$ cohomological invariant of $G$ with coefficient in $M$}.
\end{defi}

Write $\inv(G,H)$ for the set of $H$-invariants of $G$. Actually it is an Abelian group. When $H=\h^d(\star,M)$, write $\inv^d(G,M)$ for $\inv(G,H)$. If one takes the value of a invariant of $G$ in the trivial torsor, it appears that $\inv(G,H)$ decomposes as the direct sum (as Abelian group) of $H(k)$ and of the subgroup composed by invariants sending the trivial torsor to $0 \in H(k)$. Denote le later subgroup by $\inv(G,H)_0$ (or $\inv^d(G,M)_0$ when $H=\h^d(\star,M)$ for a Galois module $M$) . One has : \[ \inv(G,H)= H(k) \oplus \inv(G,H)_0  .  \] The group $\h(k)$ is the \emph{constant} invariant subgroup of  $\inv(G,H)$ and $\inv(G,H)_0$ is the subgroup of \emph{normalized} invariants. Thanks to that decomposition, one sees that the study of $\inv(G,H)$ reduces to the study of $\inv(G,H)_0$. The subgroup $H(k)$ of constant invariants gives no information on the geometry or on the group structure of $G$.

\begin{rem}
According to Theorem \ref{thm isom brauer}, an invariant of $G$ with values in the Brauer group is the same thing as a degree $2$ cohomological invariant of $G$ with coefficient in $k_s^\times$ or in $\mathbb{Q}/\mathbb{Z}(1)$.
\end{rem}

		\subsection{Construction of invariants}
		\label{sous section invariants constructions}

Fix a smooth and connected linear algebraic $k$-group $G$.

\paragraph{$\triangleright$ {Invariants via versal torsors.}}
		\label{section construction via les torseurs versels}

Let $\pi : E \to X$ be a versal torsor and let $H : \underline{Sch/k} \to \underline{Ab}$ be a \emph{contravariant} functor.

Letting $G$ act on the right on $E^2=E \times_k E$, one defines by descent the $G$-torsor $E^2 \to E^2/G$. The projections $E^2 \to E$ can be quotiented by the action of $G$ and furnish $p_1$, $p_2 : E^2/G \to X$, which induce $p_1^\ast$, $p_2^\ast : H(X) \to H(E^2/G)$. Define the \emph{balanced} elements of $H(X)$ as those $h \in H(X)$ for which $p_1^\ast(h)=p_2^\ast(h)$ (see \cite[\S 3]{MerkurjevBlinstein}). The balanced elements all together form a subgroup $H(X)_{\equi}$ of $H(X)$.

As explained in \cite[l\S 3a.]{MerkurjevBlinstein}, one defines a homomorphism \begin{equation}
\label{construction equilibres}
 \Lambda^H : \left\lbrace \begin{array}{ccc}
 H(X)_\equi & \to  & \inv(G,H) \\
 h & \mapsto & \lambda_h
\end{array} \right.  .
\end{equation}

%\begin{rem}
%For any functor morphism $\phi : H \to H^\prime$, there is a commutative diagram \[ \xymatrix{
%H(X)_\equi \ar[r]^{\Lambda^H} \ar[d]_{\phi_X} & \inv(G,H) \ar[d] \\
%H^\prime(X)_\equi \ar[r]_{\Lambda^{H^\prime}} & \inv(G,H^\prime)
%} . \]
%\end{rem}

%\begin{rem}
%\label{remarque fonctorialite lambda H}
%Let $\rho : G \hookrightarrow \mathrm{GL}_n$ and $\rho^\prime : G^\prime \hookrightarrow \mathrm{GL}_{n^\prime}$ be two faithful linear representations for smooth and connected linear algebraic groups $G$ and $G^\prime$. Suppose we are given a group homomorphism $\tilde{\sigma} : \mathrm{GL}_n \to \mathrm{GL}_{n^\prime}$ mapping $\rho(G)$ into $\rho^\prime(G^\prime)$. The homomorphism $\tilde{\sigma}$ induces a $k$-scheme map $\bar{\sigma} : X \to X^\prime$ between the schemes  $X=\mathrm{GL}_n/G$ and $X^\prime=\mathrm{GL}_{n^\prime}/G^\prime$. Then for all contravariant functor $H : \underline{Sch_k} \to \underline{Ab}$, writing $\lambda^H$ for the homomorphism \eqref{construction equilibres} determined by $G$ and $(\lambda^\prime)^H$ determined by $G^\prime$, one has a commutative diagram \[ \xymatrix{
%H(X^\prime)_\equi \ar[d]_{\bar{\rho}^\ast} \ar[r]^{\Lambda^H} & \inv(G^\prime,H) \ar[d]^{\sigma^\ast} \\
%H(X)_\equi \ar[r]_{(\Lambda^\prime)^H} & \inv(G,H) .
%} \] This way one defines some kind of functoriality for the construction of invariants vie versal torsors. However the functoriality doesn't depend only on the homomorphism $G \to G^\prime$.
%\end{rem}

Let's focus on two peculiar cases which are at the heart of the paper.

 The first one is the case of the Brauer group; $\Lambda^\br : \br(X)_\equi \to \inv(G,\br)$. 

Due to Theorem \ref{iso blinstein merkurjev} below, the other particular important case is, the construction for the functor $H :U \mapsto \h _\mathrm{Zar} ^0 \left( U , \mathcal{H}^d_{(U)} ( \mathbb{Q}/\mathbb{Z}(j)) \right)$, where $\mathcal{H}^d_{(U)} ( \mathbb{Q}/\mathbb{Z}(j))$ denotes the Zariski sheafification on $U$ of the presheaf $V \subseteq U \mapsto \h^d_\text{\'et} (V,\mathbb{Q}/\mathbb{Z}(j))$. Note that $\h _\mathrm{Zar} ^0 \left( K , \mathcal{H}^d_{(K)} ( \mathbb{Q}/\mathbb{Z}(j)) \right) \simeq \h^d(K,\mathbb{Q}/\mathbb{Z}(j))$ canonically for any field extension $K/k$. Thus one gets a group homomorphism \begin{equation}
\label{morphisme classifiant 1}
{\Lambda}^{d,j} : \h _\mathrm{Zar} ^0 \left( X , \mathcal{H}^d_{(X)} ( \mathbb{Q}/\mathbb{Z}(j)) \right)_\equi \to \inv^d (G,\mathbb{Q}/\mathbb{Z}(j)).
\end{equation} This homomorphism sent the image of $\h^d(k,\mathbb{Q}/\mathbb{Z}(j))$ in $\h _\mathrm{Zar} ^0 \left(X , \mathcal{H}^d_{(X)} ( \mathbb{Q}/\mathbb{Z}(j)) \right)_\equi$ exactly to the subgroup of constant invariants. Thus \eqref{morphisme classifiant 1} induces \[ \h _\mathrm{Zar} ^0 \left( X , \mathcal{H}^d_{(X)} ( \mathbb{Q}/\mathbb{Z}(j)) \right)_\equi \; / \;  \h^d(k,\mathbb{Q}/\mathbb{Z}(j)) \to \inv^d (G,\mathbb{Q}/\mathbb{Z}(j))_0 .  \]

For some versal torsors, it happens that all degree $d$ invariants of $G$ with coefficients in $\mathbb{Q}/\mathbb{Z}(j)$ are obtained by $\Lambda^{d,j}$ :

\begin{thm}[{\cite[Th 3.4]{MerkurjevBlinstein}}]
\label{iso blinstein merkurjev}
Take $E \to X$ a versal $G$-torsor where $E$ is a $G$-rational variety and $E(k) \neq \emptyset$. Then the homomorphism ${\Lambda}^{d,i}$ is an isomorphism for all non negative intergers $d$ and $j$.
\end{thm}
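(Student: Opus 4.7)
The plan is to reduce everything to evaluating invariants on the generic fiber $\pi_\text{gen} : E_\text{gen} \to \spec(k(X))$ of the versal torsor, and then to compare $\inv^d(G, \mathbb{Q}/\mathbb{Z}(j))$ with unramified classes in $\h^d(k(X), \mathbb{Q}/\mathbb{Z}(j))$ through the Gersten-type exact sequence from Theorem \ref{csq gersten}. Writing $H(U) := \h^0_{\mathrm{Zar}}(U, \mathcal{H}^d_{(U)}(\mathbb{Q}/\mathbb{Z}(j)))$, that sequence realises $H(X)$ as a subgroup of $\h^d(k(X), \mathbb{Q}/\mathbb{Z}(j))$. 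Injectivity of $\Lambda^{d,j}$ then follows formally: if $h \in H(X)_\equi$ satisfies $\Lambda^{d,j}(h) = 0$, evaluating the associated invariant on $E_\text{gen}$ (classified by the generic point $\eta : \spec(k(X)) \hookrightarrow X$) gives $\eta^\ast(h) = 0$, and the injectivity of $H(X) \hookrightarrow \h^d(k(X), \mathbb{Q}/\mathbb{Z}(j))$ forces $h = 0$.

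For surjectivity, I would start from $\lambda \in \inv^d(G, \mathbb{Q}/\mathbb{Z}(j))$, set $\xi := \lambda(E_\text{gen}) \in \h^d(k(X), \mathbb{Q}/\mathbb{Z}(j))$, and proceed in three moves. First, show that $\xi$ is unramified at every codimension-one point of $X$, so that by Theorem \ref{csq gersten} it lifts to a unique $\tilde\xi \in H(X)$. Second, verify $\tilde\xi \in H(X)_\equi$: the two pullbacks $\bar p_1^\ast E$ and $\bar p_2^\ast E$ are canonically isomorphic $G$-torsors over $E^2/G$ (both identify with the descent torsor $E^2 \to E^2/G$), hence functoriality of $\lambda$ yields $\bar p_1^\ast(\xi) = \bar p_2^\ast(\xi)$ in $\h^d(k(E^2/G), \mathbb{Q}/\mathbb{Z}(j))$, and then $\bar p_1^\ast(\tilde\xi) = \bar p_2^\ast(\tilde\xi)$ in $H(E^2/G)$ by Gersten injectivity applied to the smooth scheme $E^2/G$. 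Third, observe that $\Lambda^{d,j}(\tilde\xi)$ and $\lambda$ both take value $\xi$ on $E_\text{gen}$; an invariant of $G$ is determined by its value on the generic fiber of a versal torsor (the standard versality/detection principle, cf.\ \cite{GMS}), so $\Lambda^{d,j}(\tilde\xi) = \lambda$.

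The main obstacle, and the single place where the hypotheses ``$E$ is $G$-rational'' and $E(k) \neq \emptyset$ become indispensable, is the first move: proving unramifiedness of $\xi$ at each $x \in X^{(1)}$. The plan is as follows. Using $G$-rationality combined with the rational point on $E$, the restriction of $\pi : E \to X$ over $\spec(\mathcal{O}_{X,x})$ admits a birational trivialization that extends through the special fibre, producing a $G$-torsor over $\spec(\mathcal{O}_{X,x})$ whose generic value realises $E_\text{gen}$ and whose special value is trivial. Functoriality of $\lambda$ applied to the specialization datum $\spec(k(X)) \to \spec(\mathcal{O}_{X,x}) \leftarrow \spec(\kappa(x))$ then expresses the putative residue of $\xi$ at $x$ as the residue of a class specializing to $\lambda$ of a trivial torsor, hence vanishes modulo constant invariants. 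Making this local trivialization/specialization argument precise uniformly in $d$ and $j$, in particular when the characteristic divides $j$ and one has to handle the logarithmic Hodge-Witt component of $\mathbb{Q}/\mathbb{Z}(j)$, is the technical heart of the proof.
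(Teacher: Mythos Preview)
The paper does not give its own proof of this statement: it is quoted verbatim from \cite[Th.~3.4]{MerkurjevBlinstein} and used as an input, so there is nothing in the present paper to compare your proposal against. Your outline (evaluate on the generic fiber, use the Gersten-type sequence of Theorem~\ref{csq gersten} to identify $H(X)$ inside $\h^d(k(X),\mathbb{Q}/\mathbb{Z}(j))$, then check balancedness and unramifiedness) is indeed the strategy of the original Blinstein--Merkurjev proof, and your identification of the unramifiedness step as the place where $G$-rationality and $E(k)\neq\emptyset$ enter is correct.

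One caution on that step: invariants are only defined on torsors over \emph{fields}, so your sentence ``functoriality of $\lambda$ applied to the specialization datum $\spec(k(X)) \to \spec(\mathcal{O}_{X,x}) \leftarrow \spec(\kappa(x))$'' cannot be taken literally. The actual argument in \cite{MerkurjevBlinstein} does not evaluate $\lambda$ on a torsor over $\mathcal{O}_{X,x}$; rather, it uses $G$-rationality of $E$ and the $k$-point to produce, for each $x\in X^{(1)}$, a $G$-torsor over the complete (or henselian) local field $K_x$ at $x$ whose generic restriction is $E_{\text{gen}}\times_{k(X)} K_x$, and then invokes the compatibility of the residue map with $\lambda$ evaluated on that field-level torsor. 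If you intend to write out the proof, this is the point to make precise.
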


Recall there exist torsors fulfilling the assumptions of the theorem, as seen in \ref{sous section torseurs classifiant}. Moreover, ${\Lambda}^{d,i}$ is functorial with respect to faithful linear representations $G \to \mathrm{GL}_n$ for $X=\mathrm{GL}_n/G$

Since for field extensions $K/k$, $\h _\mathrm{Zar} ^0 \left( \spec(K) , \mathcal{H}^d_{(\spec(K))} ( \mathbb{Q}/\mathbb{Z}(j)) \right)_\equi \simeq \h^2(K,\mathbb{G}_m)=\br(K)$ (see Theorem \ref{thm isom brauer}), note that $\Lambda^{2,1}$ can be seen as a homomorphism \[ \Lambda^{d,j} : \h _\mathrm{Zar} ^0 \left( X , \mathcal{H}^d_{(X)} ( \mathbb{Q}/\mathbb{Z}(j)) \right)_\equi \to \inv (G,\br). \]

Moreover the two constructions $\Lambda^\br$ and $\Lambda^{2,1}$ are linked as follows : From Theorem \ref{thm isom brauer}, one has a canonical isomorphism \[ \varphi : \h _\mathrm{Zar} ^0 \left( X , \mathcal{H}^d_{(X)} ( \mathbb{Q}/\mathbb{Z}(j)) \right)_\equi \to \br(X)_\equi \] and one sees that \[ \Lambda^{2,1} = \Lambda^\br \circ \varphi . \] Theorem \ref{iso blinstein merkurjev}  implies :

\begin{lemma}
\label{lemme iso brauer construction}
The group homomorphism \[ \Lambda^\br : \br(X)_\equi \to \inv(G,\br) \] is an isomorphism.
\end{lemma}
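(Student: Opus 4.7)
The plan is to deduce the lemma essentially by formal manipulation from what has already been stated, using the factorization $\Lambda^{2,1} = \Lambda^{\br} \circ \varphi$ together with the fact that both $\Lambda^{2,1}$ and $\varphi$ are known to be isomorphisms.

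First I would record the identification of target groups. For every field extension $K/k$ there is the classical isomorphism $\h^2(K,\mathbb{Q}/\mathbb{Z}(1)) \simeq \br(K)$, and this identification is natural in $K$. Therefore the functors $\h^2(\star,\mathbb{Q}/\mathbb{Z}(1))$ and $\br$ on $\underline{Fields/k}$ are canonically isomorphic, which gives a canonical identification of invariant groups
\[ \inv^2(G,\mathbb{Q}/\mathbb{Z}(1)) \simeq \inv(G,\br). \]
Under this identification, Theorem \ref{iso blinstein merkurjev} (applied with $d=2$, $j=1$) asserts precisely that $\Lambda^{2,1}$ is an isomorphism from $\h^0_{\mathrm{Zar}}(X,\mathcal{H}^2_{(X)}(\mathbb{Q}/\mathbb{Z}(1)))_\equi$ onto $\inv(G,\br)$.

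Next I would check that the isomorphism $\varphi$ from Theorem \ref{thm isom brauer} restricts to an isomorphism
\[ \varphi : \h^0_{\mathrm{Zar}}\!\left(X,\mathcal{H}^2_{(X)}(\mathbb{Q}/\mathbb{Z}(1))\right)_\equi \overset{\sim}{\longrightarrow} \br(X)_\equi . \]
This is purely functorial: the balanced condition is defined as the equalizer of the two pullbacks $p_1^\ast, p_2^\ast : H(X) \to H(E^2/G)$ induced by the projections $p_1,p_2 : E^2/G \to X$. Since $\varphi$ is functorial with respect to flat morphisms between smooth $k$-schemes of finite type (Theorem \ref{thm isom brauer}), the two squares formed by $\varphi$ and $p_1^\ast$ (resp. $p_2^\ast$) commute, so $\varphi$ exchanges the two equalizer conditions and therefore identifies the subgroups of balanced elements.

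Finally, I would combine the two preceding points with the stated identity $\Lambda^{2,1} = \Lambda^{\br} \circ \varphi$. Since $\varphi$ (restricted to balanced parts) and $\Lambda^{2,1}$ are both isomorphisms, so is $\Lambda^{\br} = \Lambda^{2,1} \circ \varphi^{-1}$. The only point requiring a brief verification is the compatibility of the balanced-part restriction of $\varphi$ with the construction \eqref{construction equilibres}; this is not really an obstacle, as it follows directly from the naturality of the whole construction in the contravariant functor $H$. No new geometric input on $G$ or $X$ is needed beyond what the cited theorems already assume.
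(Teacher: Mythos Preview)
Your proposal is correct and follows exactly the same route as the paper: the lemma is stated there as an immediate consequence of the factorization $\Lambda^{2,1}=\Lambda^{\br}\circ\varphi$ together with Theorem~\ref{thm isom brauer} (giving that $\varphi$ is an isomorphism, functorially) and Theorem~\ref{iso blinstein merkurjev} (giving that $\Lambda^{2,1}$ is an isomorphism). You have simply spelled out in more detail the two points the paper leaves implicit, namely the identification $\inv^2(G,\mathbb{Q}/\mathbb{Z}(1))\simeq\inv(G,\br)$ and the fact that the functorial isomorphism $\varphi$ restricts to balanced elements.
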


\paragraph{$\triangleright$ {Defining invariants via the simplicial classifying torsor.}} We develop a way to define cohomological invariants in a sens similar to the previous one, and which relies on the simplicial classifying torsor and cohomology of simplicial schemes. What is explained below is inspired by the paper \cite{EKLV}.

Let $d$ be a non negative integer and let $\mathcal{F}$ be an \'etale sheaf on $\underline{Sch/k}$. We define a morphism, natural in $G$, \[\xi^{\mathcal{F},d} : \h^d (\mathrm{B}G^\bullet,\mathcal{F}) \to \inv (G,\h^d (\star,\mathcal{F})) \] according to the following recipe. For $Y/K$ a $G$-torsor over a field extension $K/k$, one has the homomorphism  \begin{equation}
\label{construction simpliciale}
 f_{Y/K} : \h^d(\mathrm{B}G^\bullet,\mathcal{F}) \to \h^d([Y|G]^\bullet, \mathcal{F}) \underset{\text{prop. \ref{iso cohomologie simplicial}}}{\simeq} \h^d(K,\mathcal{F}) 
 \end{equation} (induced by $\phi_{Y/K} : [Y|G]^\bullet \to \mathrm{B}G^\bullet$ from \eqref{morphisme induit simplicial}). Then, for all $\alpha  \in \h^d(\mathrm{B}G^\bullet,\mathcal{F})$, consider $\xi_\alpha(Y/K) := f_{Y/K}(\alpha)$. For an isomorphism $\sigma : Y/K \overset{\sim}{\rightarrow} Y^\prime/K$ between $G$-torsors over $K$, one has a commutative diagram \[ \xymatrix{
f_{Y^\prime/K} : & \h^d(\mathrm{B}G^\bullet,\mathcal{F}) \ar[r] \ar[d]_{\mathrm{Id}} &  \h^d([Y^\prime|G]^\bullet, \mathcal{F}) \ar[d]^{\sigma^\ast}_{\simeq} & \h^d(K,\mathcal{F}) \ar[l]^-{\simeq} \ar[d]^{\mathrm{Id}} \\
f_{Y/K} : & \h^d(\mathrm{B}G^\bullet,\mathcal{F}) \ar[r] &  \h^\ast([Y|G]^\bullet, \mathcal{F})  & \h^d(K,\mathcal{F}) \ar[l]^-{\simeq}
} . \] Thus $\xi_\alpha(Y/K)$ doesn't depend on the isomorphism class of $Y/K$, so define $\xi_\alpha([Y/K]):=\xi_\alpha(Y/K)$. There is a homomorphism \begin{equation} \label{morphisme classifiant 2}
\xi^{\mathcal{F},d} : \left\lbrace \begin{array}{ccc}
 \h^d (\mathrm{B}G^\bullet,\mathcal{F}) & \to & \inv (G,\h^d (\star,\mathcal{F})) \\
 \alpha & \mapsto & \xi_\alpha 
 \end{array} \right. .
 \end{equation}

\begin{rem}
\label{remark functoriality xi}
The homomorphism $\xi^{\mathcal{F},d}$ is natural in $G$ : indeed, for a group homomorphism $\sigma : G \to G^\prime$ between smooth and connected linear algebraic $k$-groups, then writing $\xi$ for $\xi^{\mathcal{F},d}$ obtained for $G$ and $\xi^\prime$ for $\xi^{\mathcal{F},d}$ obtained for $G^\prime$, there is a commutative diagram \[ \xymatrix{
\h^d (\mathrm{B}(G^\prime)^\bullet,\mathcal{F}) \ar[r]^{\xi^\prime} \ar[d] & \inv (G^\prime,\h^d (\star,\mathcal{F})) \ar[d] \\
\h^d (\mathrm{B}G^\bullet,\mathcal{F}) \ar[r]_\xi & \inv (G,\h^d (\star,\mathcal{F}))
 } .\] The left vertical arrow is the one induced by the homomorphism $\mathrm{B}G^\bullet \to \mathrm{B} (G^\prime)^\bullet$ induced by $\sigma$ (see \ref{sous section torseur simplicial}). And the right vertical arrow is $\sigma^\ast$ (see Remark \ref{remarque foncto}).
\end{rem}

\paragraph{$\triangleright$ Comparing $\xi^{\mathbb{G}_m,2}$ and $\Lambda^\br$.}

Recall the versal $G$-torsor $\pi : E \to X$ associated to a faithful representation $G \hookrightarrow \mathrm{GL}_n=E$ (so $X=E/G$). Also for a $G$-torsor $Y \to Z$, we defined a simplicial scheme map $\phi_{Y/Z} : [Y|G]^\bullet \to \mathrm{B}G^\bullet$. In the case of $\pi$ this gives $\chi=\phi_{E/X} : [E|G]^\bullet \to \mathrm{B}G^\bullet$. Note that if $\sigma : Y/K \overset{\sim}{\to} x^{-1}(\pi)$ for some $x \in X(K)$, then $\phi_{Y/Z}$ factors as \[ [Y|G]^\bullet \to [E|G]^\bullet \overset{\chi}{\to} \mathrm{B}G^\bullet \] for $\psi_x : [Y|G]^\bullet \to [E|G]^\bullet$ obtained by quotienting the scheme maps \[ \begin{array}{ccc}
G^{n+1} \times_k Y & \to  & G^{n+1} \times_k E \\ 
((g_i),y) & \mapsto & ((g_i),\sigma(y))
\end{array}  \] with respect to the diagonal $G$-actions.

Now consider the homomorphism \[ \tau : \h^2(\mathrm{B} G^\bullet,\mathbb{G}_m) \to \h^2([E|G]^\bullet,\mathbb{G}_m) \simeq \h^2(X,\mathbb{G}_m)=\br(X)  \] induced by $\chi$.

We intend to show : 
\begin{prop}
\label{propo factorisation brauer}
The homomorphism $\tau$ factors through $\br(X)_\equi$ and one has a commutative diagram \[ \xymatrix{
\h^2(\mathrm{B} G^\bullet,\mathbb{G}_m) \ar[dr]_\xi \ar[r]^{\tau}  & \br(X)_{\text{bal}} \ar[d]^{\Lambda^\br}  \\
 & \inv(G,\br) 
}
 . \]
\end{prop}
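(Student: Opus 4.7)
The proposition has two parts: the balanced factorization of $\tau$, and the commutativity of the triangle.

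For the first, observe that the two projections $p_1, p_2 : E^2/G \to X$ produce pullback $G$-torsors $p_1^\ast E$ and $p_2^\ast E$ over $E^2/G$. Both are canonically isomorphic, as $G$-torsors over $E^2/G$, to $E \times_k E$ endowed with the diagonal right $G$-action, via
\[ E \times_k E \to p_i^\ast E, \qquad (e_1, e_2) \mapsto (e_i, [e_1, e_2]), \quad i = 1, 2. \]
Composing them yields an isomorphism $p_1^\ast E \simeq p_2^\ast E$ of $G$-torsors over $E^2/G$. Next, the natural morphism of $G$-torsors $p_i^\ast E \to E$ (covering $p_i$) induces a simplicial map so that $\phi_{p_i^\ast E / E^2/G}$ factors as $[p_i^\ast E \,|\, G]^\bullet \to [E \,|\, G]^\bullet \xrightarrow{\chi} \mathrm{B}G^\bullet$. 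Applying $\h^2(-, \mathbb{G}_m)$ and using the naturality of the isomorphism in Proposition \ref{iso cohomologie simplicial}, the induced map $\h^2(\mathrm{B}G^\bullet, \mathbb{G}_m) \to \br(E^2/G)$ identifies with $p_i^\ast \circ \tau$. On the other hand, because $p_1^\ast E$ and $p_2^\ast E$ are isomorphic $G$-torsors, the naturality of Proposition \ref{iso cohomologie simplicial} with respect to isomorphisms of torsors forces $\phi_{p_1^\ast E}^\ast = \phi_{p_2^\ast E}^\ast$ on $\h^2$. Hence $p_1^\ast \tau(\alpha) = p_2^\ast \tau(\alpha)$ for every $\alpha$, and $\tau$ factors through $\br(X)_\equi$.

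For the commutativity of the triangle, fix $\alpha \in \h^2(\mathrm{B}G^\bullet, \mathbb{G}_m)$ and take a $G$-torsor $Y/K$ over a field extension $K/k$. When $K$ is infinite, versality supplies $x \in X(K)$ with $Y \simeq x^{-1}(E)$; when $K$ is finite, $Y$ is trivial by Lang's theorem and is the pullback through any $k$-point $x_0 \in X(k)$ (such a point exists because $1 \in E(k) = \mathrm{GL}_n(k)$). In either case, $\phi_{Y/K}$ factors as $[Y|G]^\bullet \xrightarrow{\psi_x} [E|G]^\bullet \xrightarrow{\chi} \mathrm{B}G^\bullet$, and combining with Proposition \ref{iso cohomologie simplicial} yields
\[ \xi_\alpha([Y/K]) = x^\ast \chi^\ast(\alpha) = x^\ast \tau(\alpha) = \Lambda^\br(\tau(\alpha))([Y/K]), \]
which establishes the commutativity of the triangle.

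The main obstacle is the canonical identification $p_1^\ast E \simeq p_2^\ast E \simeq E \times_k E$ from the first step, which bridges the gap between the ``trimmed'' simplicial piece $[E|G]^{(1)} \simeq E \times_X E$ used by $\chi$ and the ``doubled'' quotient $E^2/G$ underlying the Merkurjev-Blinstein notion of balancedness. Once this identification is in hand, both the balanced factorization and the commutativity of the triangle reduce to functoriality and iso-invariance of the canonical isomorphism in Proposition \ref{iso cohomologie simplicial}.
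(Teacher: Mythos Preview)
Your proof is correct and runs along the same overall line as the paper's, but the way you establish the balancedness of $\tau$ is genuinely cleaner. The paper first derives the identity $\xi_\alpha([Y/K])=x^\ast(\tau(\alpha))$ for torsors over fields, then specializes to the generic point of $(E\times_k E)/G$ to obtain $q_1^\ast(\tau(\alpha))=q_2^\ast(\tau(\alpha))$ in $\br\!\left(k\!\left((E\times_k E)/G\right)\right)$, and finally invokes the injectivity $\br((E\times_k E)/G)\hookrightarrow \br\!\left(k\!\left((E\times_k E)/G\right)\right)$ to descend the equality to the scheme level. You instead apply Proposition~\ref{iso cohomologie simplicial} directly to the torsors $p_1^\ast E\simeq E\times_k E\simeq p_2^\ast E$ over the scheme $(E\times_k E)/G$, so the equality $p_1^\ast\tau(\alpha)=p_2^\ast\tau(\alpha)$ comes out without any passage to the function field. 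This saves the appeal to regularity/injectivity of the Brauer group and makes the argument more self-contained; the paper's route, on the other hand, reuses the already-defined invariant $\xi_\alpha$ and is perhaps more in keeping with the ``versal'' philosophy of \cite{MerkurjevBlinstein}. For the commutativity of the triangle, your argument and the paper's coincide.
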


\begin{proof}
The proof is inspired by the proof of \cite[Th. 3.4.]{MerkurjevBlinstein}

$\diamond$ First, note that once we know $\tau$ factors by $\br(X)_\equi$, it is easy to establish the commutativity of the diagram by inspecting the definitions of $\xi$ and $\Lambda^\br$.

$\diamond$ Let $\alpha \in \h^2(\mathrm{B} G^\bullet,\mathbb{G}_m)$ and let $Y/K$ be a $G$-torsor over the extension $K/k$. There exists $x \in X(K)$ such that $Y \simeq x^{-1}(\pi)$. The point $x$ can be used to define $\psi_x : [Y|G]^\bullet \to [E|G]^\bullet$ and taking cohomology we recover \[ x^\ast : \br(X) \simeq \h^2([E|G]^\bullet,\mathbb{G}_m) \overset{\psi_x^\ast}{\longrightarrow} \h^2([Y|G]^\bullet,\mathbb{G}_m) \simeq \br(K) . \]  Moreover, the composite of $\psi_x$ followed by $\chi : [E|G]^\bullet \to \mathrm{B} G^\bullet$ gives $\phi_{Y/K} : [Y|G]^\bullet \to \mathrm{B} G^\bullet$. Thus the following equalities holds : \[\xi_\alpha(Y/K)= \phi_{Y/K}^\ast(\alpha)= x^\ast(\chi^\ast(\alpha)) = x^\ast(\tau(\alpha)) \in \br(K) . \] Since $\xi_\alpha$ is a well defined invariant of $G$, the element $x^\ast(\alpha) \in \br(K)$ doesn't depend on the point $x$ for which $Y \simeq x^{-1}(\pi)$.

Consider $p_1$, $p_2 : (E \times_k E)/G \to X$ be the two projections. We want to show that $p_1^\ast(\tau(\alpha))=p_2^\ast(\tau(\alpha)) \in \br((E \times_k E )/G$. Write $\beta$ for $\tau(\alpha)$. The $G$-torsors $q_1^{-1}(\pi)$ and $q_2^{-1}(\pi)$ are isomorphic, where $q_i$ is the composite of the generic point $\spec \left( k(E \times_k E /G)\right) \to (E \times_k E)/G$ followed by $p_i$ ($i=1$, $2$). Thus, by what has been said, $q_1^\ast(\beta)=q_2^\ast(\beta)$ in $\br( k(E \times_k E /G))$. But $\br((E \times_k E /G)) \to \br(k(E \times_k E /G))$ is injective, whence $p_1^\ast(\beta)=p_2^\ast(\beta)$ in $\br((E \times_k E) /G))$, that is $\tau(\alpha)=\beta \in \br(X)_\equi$, as wanted.
\end{proof}

\paragraph{$\triangleright$ {Comments.}}

$\bullet$ Let $\mathcal{F}$ be an \'etale sheaf on $\underline{Sch/k}$ and let $d$ be an integer $\geqslant 0$. The construction of invariants via the simplicial method described above may not furnish \emph{all} degree $d$ cohomological invariants of $G$ with coefficients in $\mathcal{F}$ : the homomorphism $\xi^\mathcal{F}$ may not be onto.

%\vspace{\baselineskip}

%$\bullet$ The definition of $f_{Y/K}$ \eqref{construction simpliciale} can be done for any $G$-torsor $Y$ over a base $k$-scheme $Y_0$. Let's call $f_{Y/Y_0}$ the homomorphism one gets in this setting. One has \[ f_{Y/Y_0} : \h^d(\mathrm{B}G^\bullet,\mathcal{F}) \to \h^d(Y_0,\mathcal{F}) . \] For all $\alpha \in \h^d(\mathrm{B}G^\bullet,\mathcal{F})$, its image $f_{Y/Y_0}(\alpha)$ doesn't depend on the isomorphism class of $Y/Y_0$. One is then able to define \emph{global} cohomological invariants $\Xi_\alpha$  for $\alpha \in \h^d(\mathrm{B}G^\bullet,\mathcal{F})$, that is a functor morphism from $\h^1(\star, \mathcal{F})$ into $\h^d(\star,\mathcal{F})$ (here these functors are seen as functors from the category $\underline{Sch/k}$ into the category $\underline{Ab}$). One says "global invariants" to stress that the given invariants are defined for every $G$-torsor over any basis and not just over fields.

%Each invariant $\xi_\alpha$ is the restriction of a global invariant. However, any invariant $i \in \inv^d(G,\mathcal{F})$ is not necessarely the restriction of a global invariant.

%We will see that thanks to Section \ref{section invariants degre 2}, any invariant in $\inv(G,\br)=\inv^2(G,\mathbb{Q}/\mathbb{Z}(1))$ is the restriction of a global invariant (\textit{i.e.} \ref{sous section invariants globaux}).

\section{Full description of degree-2 invariants}
\label{section invariants degre 2}

In this section we state and show the main result of the paper. We show Theorem \ref{thm inv brauer} which implies Theorem \ref{cor inv brauer}. The latter extends \cite[Th. 2.4.]{MerkurjevBlinstein} to \emph{all} smooth and connected linear algebraic $k$-groups over an \emph{arbritrary} base field. Moreover, Theorem \ref{cor inv brauer} states a functoriality with respect to the implied group.

These results are actually a consequence of Proposition \ref{diagramme pour le thm} which comes from calculations and comparison of spectral sequences for $\mathrm{B}G^\bullet$ and $[E|G]^\bullet$, where $E \to X$ is a versal $G$-tosor with the good properties of Theorem \ref{iso blinstein merkurjev}.

		\subsection{Statement}

First let's state two isomorphisms whose proof is developped in Subsection \ref{sous section preuve} just above.

\begin{thm}
\label{thm inv brauer}
There exists two isomorphisms, natural in $G$ with $G$ any connected, smooth, linear algebraic $k$-group, \begin{gather}
\label{iso 1} 
 \mathrm{Ext}^1(G,\mathbb{G}_m) \overset{\sim}{\to} \h^2(\mathrm{B}G^\bullet,\mathbb{G}_m) / \h^2(k,\mathbb{G}_m) , \\
 \nonumber \\
\label{iso 2}
 \xi : \h^2(\mathrm{B}G^\bullet,\mathbb{G}_m) \overset{\sim}{\to} \inv(G,\br).
\end{gather} The morphism $\xi=\xi^{\mathbb{G}_m,2}$ is that from \ref{sous section invariants constructions} for the functor $\h^2(\star,\mathbb{G}_m)$.
\end{thm}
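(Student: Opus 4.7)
The plan is to analyse Friedlander's first-quadrant spectral sequence (Proposition \ref{Suite spectrale simpliciale}) applied to $\mathrm{B}G^\bullet$ with coefficients $\mathbb{G}_m$, and to compare it with the analogous sequence for $[E|G]^\bullet$ via the map $\chi$ introduced before Proposition \ref{propo factorisation brauer}. Identifying $\mathrm{B}G^{(n)}\simeq G^n$ and transporting the face maps, the sequence reads
\[
E^{p,q}_1 = \h^q(G^p,\mathbb{G}_m) \Longrightarrow \h^{p+q}(\mathrm{B}G^\bullet,\mathbb{G}_m),
\]
and its bottom row $p\mapsto k[G^p]^\times$ is the Hochschild--Eilenberg--MacLane cocomplex computing algebraic group cohomology of $G$ with values in $\mathbb{G}_m$. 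Since a $2$-cocycle $G\times G\to\mathbb{G}_m$ modulo coboundaries is exactly a factor system for a central extension of $G$ by $\mathbb{G}_m$, the term $E^{2,0}_2$ identifies naturally with $\ext(G,\mathbb{G}_m)$.

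The $E_1$-terms that can contribute to total degree $2$ are $E^{0,2}_1=\br(k)$, $E^{1,1}_1=\pic(G)$ and $E^{2,0}_1=k[G^2]^\times$, so the target is a split short exact sequence
\[
0 \to \h^2(k,\mathbb{G}_m) \to \h^2(\mathrm{B}G^\bullet,\mathbb{G}_m) \to \ext(G,\mathbb{G}_m) \to 0
\]
whose splitting comes from the unit ``point'' $\spec(k)=\mathrm{B}G^{(0)}\to \mathrm{B}G^\bullet$. That splitting, together with the fact that both face maps $G\to\spec(k)$ are the structure map (so $d_1=0$ on the $p=0$ column), handles both the left injectivity and the vanishing of all $d_r$ differentials leaving the $E^{0,\ast}$ column. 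The real content is to kill $E^{1,1}_\infty$, and this is the main obstacle.

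To tackle it I would run in parallel the Friedlander sequence for $[E|G]^\bullet$ with $E=\mathrm{GL}_n\to X=\mathrm{GL}_n/G$: by Proposition \ref{iso cohomologie simplicial} its abutment is $\h^\ast(X,\mathbb{G}_m)$, the induced map on total degree $2$ is $\tau$ of Proposition \ref{propo factorisation brauer}, and the balanced subgroup $\br(X)_\equi$ appears naturally as $E^{0,2}_2$, i.e.\ as the kernel of $d_1=p_1^\ast-p_2^\ast:\br(X)\to\br(E^2/G)$. The low columns of this second spectral sequence are tractable because the character and Picard groups of powers of $\mathrm{GL}_n$ are completely explicit; a careful comparison via $\chi$ then shows that the $\pic(G)$-piece on the $\mathrm{B}G$-side dies at $E_\infty$ and that $\tau$ is injective with image exactly $\br(X)_\equi$.

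With the short exact sequence in hand, \eqref{iso 1} is immediate from the splitting by the unit, and its naturality in $G$ follows from the functoriality of $G\mapsto\mathrm{B}G^\bullet$ (Remark \ref{remarque foncto}). For \eqref{iso 2} one combines the isomorphism $\tau:\h^2(\mathrm{B}G^\bullet,\mathbb{G}_m)\simeq\br(X)_\equi$ with $\xi=\Lambda^\br\circ\tau$ (Proposition \ref{propo factorisation brauer}), $\Lambda^\br$ being an isomorphism (Lemma \ref{lemme iso brauer construction}), and the functoriality of $\xi$ noted in Remark \ref{remark functoriality xi}. The genuine difficulty, compared to the Blinstein--Merkurjev argument, is that we cannot rely on $G$-rationality of $E/X$ over $k^\mathrm{sep}$: the simplicial viewpoint reorganises the calculation so that the rigidity of the $\mathrm{GL}_n^{p+1}$ themselves, rather than of their quotients by $G$, drives the vanishing of the off-diagonal pieces, and this is what makes the argument go through for arbitrary smooth connected linear $G$ over an arbitrary base field.
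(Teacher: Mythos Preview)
Your overall plan---run Friedlander's spectral sequence for $\mathrm{B}G^\bullet$ and for $[E|G]^\bullet$, compare via $\chi$, then feed into $\xi=\Lambda^\br\circ\tau$---matches the paper's. But the bookkeeping of where $\ext(G,\mathbb{G}_m)$ sits in the spectral sequence is inverted, and this breaks the argument as written.

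You claim $E_2^{2,0}\simeq\ext(G,\mathbb{G}_m)$ because a regular $2$-cocycle $G\times G\to\mathbb{G}_m$ is a factor system for a central extension. For connected $G$ this is false: by Rosenlicht every unit on $G^p$ is a constant times a product of characters, and a short homotopy computation (the paper's Lemma~\ref{cohomologie nulle etape 2}) shows the complex $(k[G^p]^\times)_p$ is acyclic in degrees $\geq 2$, so $E_2^{2,0}=0$. A regular $2$-cocycle only encodes an extension whose underlying $\mathbb{G}_m$-torsor over $G$ is \emph{trivial}, and for connected $G$ all such extensions split. The genuine extensions live one row up: the torsor $G'\to G$ gives a line bundle $L\in\pic(G)$ with $m^\ast L\simeq p_1^\ast L\otimes p_2^\ast L$, i.e.\ a class in
\[
\ker\big(p_1^\ast+p_2^\ast-m^\ast:\pic(G)\to\pic(G^2)\big)=E_2^{1,1},
\]
and this kernel is exactly $\ext(G,\mathbb{G}_m)$ (Colliot-Th\'el\`ene's argument from \cite{Colliot_resolutions} works verbatim). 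So the task is not to \emph{kill} $E_\infty^{1,1}$ but to \emph{identify} it; the resulting sequence is $0\to\ext(G,\mathbb{G}_m)\to\h^2(\mathrm{B}G^\bullet,\mathbb{G}_m)\to\br(k)\to 0$, with $\ext$ the subobject rather than the quotient.

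There is a second slip on the $[E|G]^\bullet$ side: since $[E|G]^{(0)}\simeq E$, the term $E_1^{0,2}$ is $\br(E)$, not $\br(X)$; the group $\br(X)$ is the \emph{abutment}, so $\br(X)_\equi$ cannot be read off as $E_2^{0,2}$. The paper instead uses $\pic(\mathrm{GL}_n)=0$ to identify the $q=1$ rows of the two spectral sequences (Lemma~\ref{identification complexe etape 2}), obtains the parallel sequence $0\to\ext(G,\mathbb{G}_m)\to\br(X)\to\br(E)$, and deduces that $\tau$ maps $\h^2(\mathrm{B}G^\bullet,\mathbb{G}_m)_0$ isomorphically onto $\ker(\br(X)\to\br(E))$, which is then checked to equal $(\br(X)_0)_\equi$.
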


Putting together the morphisms \eqref{iso 1} and $\h^2(\mathrm{B}G^\bullet,\mathbb{G}_m) /\h^2(k,\mathbb{G}_m) \to \inv(G,\br)_0$ induced by \eqref{iso 2}, one gets the following description of the group of invariants with coefficients in the Brauer group :

\begin{thm}
\label{cor inv brauer}
There is an group isomorphism, natural in $G$ for $G$ any connected, smooth linear algebraic $k$-group, \begin{equation}
\label{iso inv brauer} \mathrm{Ext}^1(G,\mathbb{G}_m) \overset{\sim}{\to} \mathrm{Inv}(G,\mathrm{Br})_0 .
\end{equation}
\end{thm}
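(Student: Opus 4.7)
The plan is to deduce Theorem \ref{cor inv brauer} from Theorem \ref{thm inv brauer} by matching the two isomorphisms therein via the canonical decomposition of invariants into constant and normalized parts.

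First I would recall from Subsection \ref{sous section invariants generalites} the splitting $\inv(G,\br) = \br(k) \oplus \inv(G,\br)_0$, in which $\br(k)$ is the subgroup of constant invariants, embedded by sending $c \in \br(k)$ to the invariant $Y/K \mapsto (\text{image of } c \text{ in } \br(K))$. On the simplicial side, pullback along the structure morphism $s : \mathrm{B}G^\bullet \to \mathrm{C}_k \spec(k)^\bullet$ provides $s^\ast : \h^2(k,\mathbb{G}_m) \to \h^2(\mathrm{B}G^\bullet,\mathbb{G}_m)$, and this is the embedding tacitly used in the quotient appearing in \eqref{iso 1}.

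The key verification is that $\xi$ from \eqref{iso 2} carries $s^\ast \h^2(k,\mathbb{G}_m)$ bijectively onto the constant subgroup $\br(k) \subset \inv(G,\br)$. For $\alpha \in \h^2(k,\mathbb{G}_m)$ and a $G$-torsor $Y/K$, the composite $s \circ \phi_{Y/K} : [Y|G]^\bullet \to \mathrm{C}_k \spec(k)^\bullet$ factors through $\mathrm{C}_k \spec(K)^\bullet$, so unwinding the definition \eqref{construction simpliciale} of $f_{Y/K}$ shows that $\xi_{s^\ast \alpha}(Y/K)$ equals the image of $\alpha$ in $\br(K)$, independently of $Y$. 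Hence $\xi \circ s^\ast$ coincides with the constant embedding $\br(k) \hookrightarrow \inv(G,\br)$; combined with the injectivity of $\xi$, this forces $\xi$ to descend to a natural isomorphism $\bar{\xi} : \h^2(\mathrm{B}G^\bullet,\mathbb{G}_m)/\h^2(k,\mathbb{G}_m) \overset{\sim}{\to} \inv(G,\br)_0$.

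Composing $\bar{\xi}$ with \eqref{iso 1} then yields the desired natural isomorphism $\ext(G,\mathbb{G}_m) \overset{\sim}{\to} \inv(G,\br)_0$, functoriality in $G$ being inherited from both factors (cf.\ Remark \ref{remark functoriality xi} for $\xi$). The real technical effort sits entirely inside Theorem \ref{thm inv brauer}, where the two isomorphisms are built by comparing spectral sequences for $\mathrm{B}G^\bullet$ and $[E|G]^\bullet$; the present corollary is essentially bookkeeping of where the constant part lives on each side.
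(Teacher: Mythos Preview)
Your proposal is correct and follows essentially the same route as the paper: deduce the corollary from Theorem \ref{thm inv brauer} by checking that $\xi$ respects the decomposition into constant and normalized parts, so that it descends modulo $\br(k)$ and can be composed with \eqref{iso 1}. The only cosmetic difference is that you split $\h^2(\mathrm{B}G^\bullet,\mathbb{G}_m)$ via the section $s^\ast$ coming from the structure map, whereas the paper phrases the same splitting via the retraction $\epsilon^\ast$ induced by the unit $\epsilon:\spec(k)\to G$; these give the same direct-sum decomposition and the argument is otherwise identical.
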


\paragraph{Sketch of the proof.}  Let's give a sketch of the proof to come. For $G$ as above, let $G \hookrightarrow \mathrm{GL}_n$ be a faithful representation. Write $E=\mathrm{GL}_n$ and $X=\mathrm{GL}_n/G$. Recall the constructions of invariants from the previous section : \begin{gather*}
	\xi=\xi^{\mathbb{G}_m,2} : \h^2(\mathrm{B}G^\bullet,\mathbb{G}_m) \to  \inv(G,\br) ,\\
	 \\
	\Lambda^\br : \br(X)_\equi  \to  \inv(G,\br) ,\\
\end{gather*} We have seen there is a factorization (Proposition \ref{propo factorisation brauer}) \[ \xymatrix{
\h^2(\mathrm{B} G^\bullet,\mathbb{G}_m) \ar[dr]_\xi \ar[r]^{\tau}  & \br(X)_{\text{bal}} \ar[d]^{\Lambda^\br}_{\simeq}  \\
 & \inv(G,\br) 
}
 ,\] that $\Lambda^\br$ is an isomorphism (lemma \ref{lemme iso brauer construction}) and $\xi$ is natural in $G$ (Remark \ref{remark functoriality xi}).

So the proof of \ref{cor inv brauer} relies on showing $\tau$ is an isomorphism and that $\ext(G,\mathbb{G}_m) \simeq \h^2(\mathrm{B}G^\bullet,\mathbb{G}_m) / \h^2(k,\mathbb{G}_m)$ functorialy in $G$. It is stated in Proposition \ref{diagramme pour le thm} below. In order to obtain these isomorphisms we will use the spectral sequence from Proposition \ref{Suite spectrale simpliciale} linking the cohomology of each piece of a simplicial scheme to its whole cohomology, for both the simplicial schemes $\mathrm{B}G^\bullet$ and $[X|G]^\bullet$.

		\subsection{Proof}
		\label{sous section preuve}

Let $G$ a smooth and connected linear algebraic $k$-group, and $\pi : E \to X$ be a versal torsor built from an embedding $G \hookrightarrow E=\mathrm{GL}_n$ (as seen in \ref{sous section torseurs classifiant}). We fix these data for all the subsection.

Consider the homomorphism \[ \epsilon^\ast : \h^2(\mathrm{B}G^{\bullet}, \mathbb{G}_m) {\to} \h^2(C_k \spec(k)^\bullet,\mathbb{G}_m) \underset{}{\simeq} \br(k) \] induced by the unit morphism of $G$ as a group scheme, $\epsilon : \spec(k) \to G$; and the homomrphism \[  \tau : \h^2(\mathrm{B}G^{\bullet}, \mathbb{G}_m) {\to} \h^2([E|G]^\bullet,\mathbb{G}_m) \underset{}{\simeq} \br(X)  \] where $\tau$ is the morphism induced by $[E|G]^\bullet \to \mathrm{B} G^\bullet$ from subsection \ref{sous section torseur simplicial}.

Theorem \ref{thm inv brauer} is a consequence of :

\begin{prop}
\label{diagramme pour le thm}
There is a commutative diagram with exact rows : \begin{equation}
\label{diagramme theoreme}
\xymatrix{ 
0 \ar[r] & \mathrm{Ext}^1(G,\mathbb{G}_m) \ar[r] \ar[d]^{=} & \h^2(\mathrm{B}G^{\bullet}, \mathbb{G}_m) \ar[r]^-{\epsilon^\ast} \ar[d]^{\tau} & \br(k) \ar[r] \ar[d] & 0 \\
0 \ar[r] & \mathrm{Ext}^1(G,\mathbb{G}_m) \ar[r] & \br(X) \ar[r]^{\pi^\ast} & \br(E) &
} .
\end{equation}

Moreover the first row is natural in $G$.
\end{prop}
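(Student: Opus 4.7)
The plan is to derive both rows from the spectral sequence of Proposition \ref{Suite spectrale simpliciale} applied with $\mathbb{G}_m$-coefficients: to $\mathrm{B}G^\bullet$ for the top row, and to $[E|G]^\bullet$ for the bottom row (using Proposition \ref{iso cohomologie simplicial} to identify the abutment of the latter with $\h^\ast(X,\mathbb{G}_m)$). The commutativity of the diagram and the equality of the leftmost vertical arrow will then both come out of the morphism of spectral sequences induced by $\chi:[E|G]^\bullet\to\mathrm{B}G^\bullet$.

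For the top row I would start with the spectral sequence $E_1^{p,q}=\h^q(G^p,\mathbb{G}_m)\Rightarrow \h^{p+q}(\mathrm{B}G^\bullet,\mathbb{G}_m)$. One checks that $d_1=\delta_0^\ast-\delta_1^\ast=0$ on $E_1^{0,2}=\h^2(\spec(k),\mathbb{G}_m)=\br(k)$, since both face maps $G\to\spec(k)$ agree; thus the edge homomorphism in total degree $2$ lands in $\br(k)$, and by its very construction (as $\mathrm{B}G^{(0)}=\spec(k)$) it is exactly $\epsilon^\ast$. The retraction $G\to\spec(k)$ of $\epsilon$ provides a section of $\epsilon^\ast$, so $\epsilon^\ast$ is split surjective and $\ker(\epsilon^\ast)$ coincides with the first step $F^1$ of the abutment filtration. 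I would then identify $F^1\simeq\ext(G,\mathbb{G}_m)$ by assembling the pieces $E_2^{1,1}=\ker(\pic(G)\to\pic(G^2))$ (primitive line bundles on $G$) and $E_2^{2,0}$ (cocycle data in units on $G^2$) through the differentials $d_2$ (and, if need be, $d_3$): this is the classical description of central extensions of $G$ by $\mathbb{G}_m$ as pairs (line bundle on $G$, multiplicative structure up to coherence). Naturality in $G$ is then automatic from that of $\mathrm{B}(-)^\bullet$.

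For the bottom row I would run the analogous spectral sequence $E_1^{p,q}=\h^q(E\times_k G^p,\mathbb{G}_m)\Rightarrow \h^{p+q}(X,\mathbb{G}_m)$, whose edge map in total degree $2$ is $\pi^\ast:\br(X)\to\br(E)$, so that $\ker(\pi^\ast)=F^1\br(X)$. To compare this filtration with that of the top row via $\chi^\ast$, I would exploit that $E=\mathrm{GL}_n$ is $k$-rational with $E(k)\neq\emptyset$: one has $\pic(E)=0$ and $\mathcal{O}(E)^\times/k^\times\simeq\mathbb{Z}$, yielding K\"unneth splittings $\pic(E\times_k G^p)\simeq\pic(G^p)$ and $\mathcal{O}(E\times_k G^p)^\times/k^\times\simeq\mathbb{Z}\oplus\mathcal{O}(G^p)^\times/k^\times$ which are compatible with the face maps. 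Through them $\chi^\ast$ induces isomorphisms on the relevant $E_2$-terms, whence $\tau$ identifies $\ker(\epsilon^\ast)\simeq F^1\h^2(\mathrm{B}G^\bullet,\mathbb{G}_m)$ with $\ker(\pi^\ast)=F^1\br(X)$, in a way which, composed with the identification of paragraph 2, is the identity on $\ext(G,\mathbb{G}_m)$. This gives the full diagram with exact rows, together with its commutativity.

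The main obstacle is the double bookkeeping involved: first, extracting from the spectral sequence of $\mathrm{B}G^\bullet$ the classical identification $F^1\simeq\ext(G,\mathbb{G}_m)$ by tracking explicitly how $E_\infty^{1,1}$ and $E_\infty^{2,0}$ reassemble (via the $d_r$'s) into a central extension; and second, checking that the K\"unneth splittings used for $[E|G]^\bullet$ are compatible with the differentials, so that the isomorphism induced by $\tau$ between the two copies of $F^1$ truly restricts to the identity on $\ext(G,\mathbb{G}_m)$ rather than to some less controlled automorphism.
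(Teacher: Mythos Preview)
Your overall strategy coincides with the paper's: run the spectral sequence of Proposition~\ref{Suite spectrale simpliciale} for $\mathbb{G}_m$ on $\mathrm{B}G^\bullet$ and on $[E|G]^\bullet$, read off the two rows from the abutment filtration in total degree~$2$, and compare via $\chi$. Your K\"unneth-type splittings for $\pic(E\times_k G^p)$ and for units are exactly what the paper uses to match the two $q=1$ rows (the paper isolates this as a separate lemma, using that $E=\mathrm{GL}_n$ is $k$-rational with $\pic(E)=0$), and your identification of the edge maps with $\epsilon^\ast$ and $\pi^\ast$ via the section is also how the paper proceeds.

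Where you part ways is in the handling of $F^1$. You plan to reassemble $E_\infty^{1,1}$ and $E_\infty^{2,0}$ into the data of a central extension and flag this bookkeeping as the main obstacle. The paper sidesteps it entirely by proving that $E_2^{p,0}=0$ for all $p\geqslant 2$: Rosenlicht's lemma gives $k[G^n\times_k Y]^\times \simeq k^\times \oplus \widehat{G}(k)^n \oplus \widehat{Y}(k)$, and an explicit contracting homotopy (\`a la Sansuc) shows this complex is acyclic in degree $\geqslant 2$. Consequently $E_\infty^{2,0}=0$ and $E_\infty^{1,1}=E_2^{1,1}$, so there is no extension problem to solve: one has directly
\[
F^1 \;=\; E_2^{1,1} \;=\; \ker\bigl(p_1^\ast+p_2^\ast-m^\ast:\pic(G)\to\pic(G^2)\bigr).
\]
The remaining identification of this kernel with $\ext(G,\mathbb{G}_m)$ then invokes Colliot-Th\'el\`ene's argument (whose proof goes through for any smooth connected linear $G$, not only reductive ones), a step you do not mention but which carries real content: every primitive line bundle on $G$ actually underlies a central extension by $\mathbb{G}_m$.

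So your sketch is on the right track, but the ``double bookkeeping'' you worry about is illusory once the Rosenlicht-based vanishing of the bottom row is in hand; that vanishing, together with the Colliot-Th\'el\`ene identification, is the missing substance.
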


Given Proposition \ref{diagramme pour le thm}, let's show Theorem \ref{thm inv brauer}. But first let's make some remarks on the decomposition of some cohomology groups with respect to the cohomology of the base field.

The particular $k$-point $\epsilon : \spec(k) \to G$ and $\epsilon_0 : \spec(k) \overset{\epsilon}{\to}  G \to \mathrm{GL}_n \to X$ enable to identify $\br(k)$ to direct factors of $\h^2(\mathrm{B}G_\bullet,\mathbb{G}_m)$ and of $\br(X)$. Then \[ \h^2(\mathrm{B}G_\bullet,\mathbb{G}_m) = \br(k) \oplus \h^2(\mathrm{B}G_\bullet,\mathbb{G}_m)_0 \: , \: \: \:  \: \: \: \:  \br(X) = \br(k) \oplus \br(X)_0  \] where \[ \h^2(\mathrm{B}G_\bullet,\mathbb{G}_m)_0 := \ker(\h^2(\mathrm{B}G_\bullet,\mathbb{G}_m) \overset{\epsilon^\ast}{\longrightarrow} \br(k) ) \] and likewise \[ \br(X)_0 : = \ker(\br(X) \overset{\epsilon_0^\ast}{\longrightarrow} \br(k)) . \] The map $\tau$ is the identity map on $\br(k)$ and respects the former decomposition. The same holds for $\xi$ \eqref{morphisme classifiant 2} and $\Lambda^{\br} : \br(X)_\equi \to \inv(G,\br)$ \eqref{construction equilibres} for the decomposition of $\inv(G,\br)$ into constant and normalized invariants.

\begin{proof}[Proof of Theorem \ref{thm inv brauer}]
Assume here that Proposition \ref{diagramme pour le thm} holds. First of all, take the isomorphism given by the first row of diagram \eqref{diagramme theoreme} to be the isomorphism \eqref{iso 1} $\mathrm{Ext}^1(G,\mathbb{G}_m) \overset{\sim}{\to} \h^2(\mathrm{B}G^\bullet,\mathbb{G}_m)_0 = \h^2(\mathrm{B}G^\bullet,\mathbb{G}_m) / \h^2(k,\mathbb{G}_m)$ of the theorem. It is natural in $G$ as wanted. Secondly, as it is said above, in order to have the isomorphism \eqref{iso 2} of the theorem, it is enough to see that $\xi$ induces an isomorphism $\h^2(\mathrm{B}G^\bullet,\mathbb{G}_m)_0 \to \inv(G,\br)_0$. This is what is done in the sequel.

Proposition \ref{diagramme pour le thm} implies that $\tau :\h^2(\mathrm{B}G^\bullet,\mathbb{G}_m)_0  \to \ker(\br(X) \to \br(E))$ is an isomorphism. According to Lemma \ref{lemme iso brauer construction}, one has $\Lambda^{\br} : (\br(X)_0)_\equi \simeq \inv(G,\br)_0$, so it remains to show that the subgroup $\ker(\br(X) \to \br(E)) \subseteq \br(X)$ is exactly $(\br(X)_0)_\equi$. TFrom Proposition \ref{propo factorisation brauer}, it follows that the inclusion \[ \ker(\br(X) \to \br(E)) \subseteq (\br(X)_0)_\equi \] holds. Regarding the reverse inclusion, let $\alpha \in (\br(X)_0)_\equi$. Consider the composition $h : \br(X) \to \br(E) \hookrightarrow \br(k(E))$, where the first arrow is induced by the versal torsor $\pi : E \to X$ and the second by the generic point $\spec(k(E)) \to E$ of $E$. The element $h(\alpha)$ is the value of of the invariant $\Lambda^{\br}(\alpha)$ in the torsor $E_\text{gen} \times_k k(E)$. But this torsor is the trivial $G$-torsor over $k(E)$, thus $h(\alpha)=0$, leading to $\alpha \in \ker(\br(X) \to \br(E))$. 
\end{proof}

Now let's show Proposition \ref{diagramme pour le thm} in details.

\begin{center}
\underline{\textit{Proof of Proposition \ref{diagramme pour le thm}}}
\end{center}

We begin by establishing separatly the two exact rows of Diagram \eqref{diagramme theoreme}. Actually, the process is the same for the two rows; only the last details differ.

\paragraph{Step 1 : An exact sequence.} Denote by $Y$ a $k$-scheme which is either $\spec(k)$ or $E=\mathrm{GL}_n$.

Apply Proposition \ref{Suite spectrale simpliciale} to the simplicial $k$-scheme $[Y|G]^\bullet$ (when $Y=\spec(k)$, it is just $\mathrm{B}G^\bullet$) : There is a convergent spectral sequence \begin{equation}
\label{suite convergente etape 2}
 E_1^{p,q}= \h^q([Y|G] ^{(p)},\mathbb{G}_m) \Rightarrow \h^{p+q}([Y|G]^\bullet,\mathbb{G}_m).
\end{equation}

The first page of this spectral sequence is \begin{center} 
\begin{tikzpicture} 
\node (B) at (0,0) {$\h^0(Y,\mathbb{G}_m)$};
\node (C) at (4,0) {$\h^0((G^2 \times_k Y)/G,\mathbb{G}_m)$};
\node (D) at (9,0) {$ \h^0((G^3 \times_k Y)/G,\mathbb{G}_m)$};
\node (E) at (12,0) {};

\node (B1) at (0,1) {$\h^1(Y,\mathbb{G}_m)$};
\node (C1) at (4,1) {$\h^1((G^2 \times_k Y)/G,\mathbb{G}_m)$};
\node (D1) at (9,1) {$ \h^1((G^3 \times_k Y)/G,\mathbb{G}_m)$};
\node (E1) at (12,1) {};

\node (B2) at (0,2) {$\h^2(Y,\mathbb{G}_m)$};
\node (C2) at (4,2) {$\h^2((G^2 \times_k Y)/G,\mathbb{G}_m)$};
\node (D2) at (9,2) {$ \h^2((G^3 \times_k Y)/G,\mathbb{G}_m)$};
\node (E2) at (12,2) {};

\draw[->] (D) -- (E);
\draw[->] (D1) -- (E1);
\draw[->] (D2) -- (E2);

\draw[->] (B.east) -- (C.west);
\draw[->] (C.east) -- (D.west);

\draw[->] (B1.east) -- (C1.west);
\draw[->] (C1.east) -- (D1.west);

\draw[->] (B2.east) -- (C2.west);
\draw[->] (C2.east) -- (D2.west);

\draw[thick,dashed,->] (-0.5,-0.5) -- (12,-0.5) node[below] {$p$};
\draw[thick,dashed] (0,-1) -- (B.south);
\draw[thick,dashed] (B.north) -- (B1.south);
\draw[thick,dashed] (B1.north) -- (B2.south);
\draw[thick,dashed,->] (B2.north) -- (0,3) node[right] {$q$};
\end{tikzpicture}
\end{center} where the differentials (from the $n$-th column to the $(n+1)$-th column) are $d=\sum_{i=0}^{n+1} (-1)^i \delta_i^\ast$ for $\delta_i :( G^{n+2} \times_k Y) /G \to (G^{n+1} \times_k Y)/G$ is induced by deleting the $i$-th coordinate of $G^{n+2}$ ($0 \leqslant i \leqslant n+1$ - the first coordinate is numbered $0$).

Following \cite[(1.3.1)]{Deligne_extensionscentrales}, let's consider the isomorphisms  \[ \begin{array}{ccc} 
(G^{n+1} \times_k Y) /G & \overset{\sim}{\to} & G^n \times_k Y \\
(g_0,\cdots,g_n,x) \, \mathrm{mod} \, G & \mapsto & (g_0 g_1^{-1},\cdots,g_{n-1} g_n^{-1},x \cdot g_0^{-1})
\end{array} .\] Thanks to these isomorphisms, the familly of $G^n\times_k Y$ can be seen as a $k$-simplicial scheme whose the face maps $\delta_i$ of $[Y|G]^\bullet$ are changed into \[ \delta_i^\prime =\delta_i^{n,\prime} : \left\lbrace \begin{array}{cccl}
G^n \times_k Y & \to & G^{n-1} \times_k Y &  \\
(g_1,\cdots,g_n,x) & \mapsto & (g_2,\cdots,g_n,x \cdot g_1) & \text{si } i=0 \\
& & (g_1,\cdots,g_{i-1},g_i g_{i+1},g_{i+2},\cdots,g_n,x) & \text{si }  0 < i <n \\
& & (g_1,\cdots,g_{n-1},x) & \text{si } i=n  
\end{array} \right. .\]

For clarity, let's precise what are $\delta_i^\prime$ in the limit cases $i=1$ and $i=n-1$ : if $n=2$, $\delta_1^\prime(g_1,g_2)=g_1g_2$, and if $n \geq 2$, one has \begin{align*}
\delta_1^\prime (g_1,\cdots,g_n,x) & = (g_1g_2,g_3,\cdots,g_n,x) \\
\delta_{n-1}^\prime (g_1,\cdots,g_n,x) & = (g_1,\cdots,g_{n-2},g_{n-1}g_n,x).
\end{align*} The above first page of the spectral sequence then becomes \begin{center}
\begin{tikzpicture}
\node (A) at (-2,0) {$0$};
\node (B) at (0,0) {$\h^0(Y,\mathbb{G}_m)$};
\node (C) at (3,0) {$\h^0(G\times_k Y,\mathbb{G}_m)$};
\node (D) at (6.5,0) {$ \h^0(G^2 \times_k Y,\mathbb{G}_m)$};
\node (E) at (9,0) {$\cdots$};
\node (A1) at (-2,1) {$0$};
\node (B1) at (0,1) {$\h^1(Y,\mathbb{G}_m)$};
\node (C1) at (3,1) {$\h^1(G \times_k Y,\mathbb{G}_m)$};
\node (D1) at (6.5,1) {$ \h^1(G^2 \times_k Y,\mathbb{G}_m)$};
\node (E1) at (9,1) {$\cdots$};
\node (A2) at (-2,2) {$0$};
\node (B2) at (0,2) {$\h^2(Y,\mathbb{G}_m)$};
\node (C2) at (3,2) {$\h^2(G \times_k Y,\mathbb{G}_m)$};
\node (D2) at (6.5,2) {$ \h^2(G^2 \times_k Y,\mathbb{G}_m)$};
\node (E2) at (9,2) {$\cdots$};
\draw[->] (A.east) -- (B.west);
\draw[->] (B.east) -- (C.west);
\draw[->] (C.east) -- (D.west);
\draw[->] (D.east) -- (E.west);
\draw[->] (A1.east) -- (B1.west);
\draw[->] (B1.east) -- (C1.west);
\draw[->] (C1.east) -- (D1.west);
\draw[->] (D1.east) -- (E1.west);
\draw[->] (A2.east) -- (B2.west);
\draw[->] (B2.east) -- (C2.west);
\draw[->] (C2.east) -- (D2.west);
\draw[->] (D2.east) -- (E2.west);
\draw[thick,dashed,->] (-2.5,-0.5) -- (10,-0.5) node[below] {$p$};
\draw[thick,dashed] (0,-1) -- (B.south);
\draw[thick,dashed] (B.north) -- (B1.south);
\draw[thick,dashed] (B1.north) -- (B2.south);
\draw[thick,dashed,->] (B2.north) --  (0,3) node[right] {$q$};
\end{tikzpicture}
\end{center} where the differentials are $d=\sum (-1)^i (\delta^\prime_i)^\ast$.

\begin{lemma}
\label{cohomologie nulle etape 2}
The complex $(\h^0(G^n \times_k Y,\mathbb{G}_m))_{n \in \mathbb{N}}=(k[G^n \times_k Y]^\ast)_{n \in \mathbb{N}}$ has zero cohomology in degree $\geqslant 2$.
\end{lemma}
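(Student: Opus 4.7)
The plan is to describe the complex explicitly via Rosenlicht's unit theorem, then compute its cohomology summand by summand. Since $G$ is smooth and connected and $Y\in\{\spec(k),\mathrm{GL}_n\}$ is smooth, geometrically integral and has a $k$-point, iterated application of Rosenlicht's lemma gives a canonical decomposition
\[ k[G^n\times_k Y]^* \;=\; k^*\,\oplus\,\widehat{G}^{\oplus n}\,\oplus\,\widehat{Y}, \]
where $\widehat{G}:=k[G]^*/k^*$ is the character module of $G$ and $\widehat{Y}:=k[Y]^*/k^*$. If one wishes to avoid subtleties in the non-geometrically-integral case, one can base change to $k_s$ and take Galois invariants at the end. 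Note that $\widehat{Y}=0$ for $Y=\spec(k)$ and $\widehat{Y}=\mathbb{Z}\cdot\det$ for $Y=\mathrm{GL}_n$.

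I would then check that the differential $d=\sum_i(-1)^i(\delta_i^\prime)^*$ respects this direct sum, so that the complex decomposes over $k_s$ into three subcomplexes to be analyzed separately. On the summand $k^*$ each $(\delta_i^\prime)^*$ is the identity, so $d$ alternates between $0$ and $\mathrm{id}$; the cohomology of this subcomplex is $k^*$ concentrated in degree $0$. On the character summand $\widehat{G}^{\oplus\bullet}$, using the formulas for $\delta_i^\prime$ together with $\chi(g_ig_{i+1})=\chi(g_i)+\chi(g_{i+1})$, one finds that a character $\chi$ placed in position $j$ of $\widehat{G}^{\oplus n}$ is sent to $a_{j,n}\,\chi^{(j)} + b_{j,n}\,\chi^{(j+1)}$ in $\widehat{G}^{\oplus (n+1)}$ with integer coefficients $a_{j,n}=\sum_{i=j}^{n+1}(-1)^i$ and $b_{j,n}=\sum_{i=0}^{j}(-1)^i$; a direct bookkeeping computation then shows this subcomplex is acyclic in degrees $\geqslant 2$, with only cohomology $\widehat{G}$ in degree $1$. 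The $\widehat{Y}$ summand is handled analogously: for $Y=\mathrm{GL}_n$ the $\det$-twist inside $\delta_0^\prime$ throws an extra contribution into the character summand, and the residual $\widehat{Y}$-part satisfies the same alternating-sum pattern as the $k^*$-part, with cohomology concentrated in degree $0$. Galois descent back to $k$ is then harmless, because each subcomplex is built from permutation $\Gamma_k$-modules.

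The main obstacle is the combinatorial verification that the character subcomplex $\widehat{G}^{\oplus\bullet}$ is acyclic in degrees $\geqslant 2$. With the coefficient pattern $(a_{j,n},b_{j,n})$ above, this amounts to showing $\ker(d^{n+1})=\mathrm{im}(d^n)$ for every $n\geqslant 1$: one verifies this by hand in low degrees and then produces an explicit contracting homotopy, or equivalently identifies the subcomplex with (a shift of) the normalized bar resolution of a trivial module, whose acyclicity in positive degrees is classical. Once this step is in hand, summing contributions from the three summands immediately yields the stated vanishing.
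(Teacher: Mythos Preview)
Your approach is the same as the paper's in outline---Rosenlicht decomposition followed by a combinatorial homotopy argument---but two points need tightening.

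First, the claim that the differential respects the three-way direct sum $k^*\oplus\widehat{G}^{\oplus n}\oplus\widehat{Y}$ is false, as you yourself notice a few lines later: the face map $\delta_0'$ sends $\psi\in\widehat{Y}$ to something involving $\psi|_G\in\widehat{G}$. What is true is that $\widehat{G}^{\oplus n}$ is a \emph{sub}complex and $\widehat{Y}$ is a \emph{quotient}, so you could run a filtration/long-exact-sequence argument. The paper sidesteps this by making only a two-way split: $C_1=(k^\times)_n$ with the alternating differential, and $C_2=(\widehat{G}(k)^n\oplus\widehat{Y}(k))_n$ kept together. On $C_2$ the paper writes out the differentials explicitly and, following Sansuc, exhibits the contracting homotopy
\[
s_n(\phi_1,\ldots,\phi_n,\psi)=(-\phi_1,\phi_3,\phi_4,\ldots,\phi_n,\psi)
\]
in degrees $n\geqslant 2$, which kills the cohomology in one stroke and absorbs the $\psi|_G$ cross-term automatically. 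This is cleaner than separating $\widehat{G}$ from $\widehat{Y}$ and then repairing the interaction.

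Second, the detour through $k_s$ and Galois descent is unnecessary: $G$ and $Y$ are smooth connected $k$-groups, so Rosenlicht's lemma already gives the decomposition over $k$, exactly as the paper uses it. Your descent justification (``permutation $\Gamma_k$-modules'') is also not right in general---$\widehat{G}_{k_s}$ need not be a permutation module---so if you did go this route you would need a different argument for why taking invariants preserves acyclicity.
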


\begin{proof} For a $k$-group scheme $H$, we write $\widehat{H}$ for the character group of $H$ : \[ \widehat{H}=\mathrm{Hom}_{k-\text{group}}(H,\mathbb{G}_m) . \]

Recall $Y=\spec(k)$ or $Y=\mathrm{GL}_n$. Since $G$ and $Y$ are smooth and connected $k$-groups, Rosenlicht lemma says that the complex in stake can be identified to $(k^\times \oplus \widehat{G^n \times Y}(k))_n=( k^\times \oplus  \widehat{G}(k)^n \oplus \widehat{Y}(k))_n$ which is the sum of the complex $C_1:=(k^\times)_n$ with differentials \[ d_1^n = \left\lbrace \begin{array}{l}
0 \text{ si } n \text{ pair} \\
\mathrm{Id}_{k^\times} \text{ si } n \text{ impair}
\end{array} \right. \] and of the complex $C_2:=(\widehat{G}(k)^n \oplus \widehat{Y}(k))_n$ with differentials \[ d_2^n=\sum_{i=0}^{n+1} (-1)^i ({\delta}_i^{n+1,\prime})^\star \] where $({\delta}_i^{n+1,\prime})^\star : \widehat{G}^{n}(k) \oplus \widehat{Y}(k) \to \widehat{G}^{n+1}(k) \oplus \widehat{Y}(k)$ is the homomorphism on character groups induced by $\delta_i^{n+1,\prime}$. \\ Complex $C_1$ has clearly zero cohomology. For $C_2$, we are in the same situation as in the proof of \cite[Lem. 6.12.]{Sansuc_Brauer}. The differentials are \begin{align*}
d_2^{2i}(\phi_1,\cdots,\phi_{2i},\psi) & =(\psi_{|G} - \phi_1, 0,\cdots, \underset{2j+1-\text{th position}, \, j \geqslant 1}{\underbrace{\phi_{2j}-\phi_{2j+1}}},\underset{2j+2-\text{th position}}{\underbrace{0}}, \cdots,\phi_{2i},0 ) \\
d_2^{2i+1}(\phi_1,\cdots,\phi_{2i+1},\psi) & =(\psi_{|G}, \phi_2,\phi_2,\cdots, \underset{2j-\text{th position}, \, j \geqslant 1}{\underbrace{\phi_{2j}}},\underset{2j+1-\text{th position}}{\underbrace{\phi_{2j}}}, \cdots,\phi_{2i},\phi_{2i},\psi )
\end{align*} for all $i \geqslant 0$ and where $\phi_j$ describes $\widehat{G}(k)$, and $\psi$ describes $\widehat{Y}(k)$. Following the proof of  \cite[Lem. 6.12.]{Sansuc_Brauer}, the maps, $n \geqslant 2$, \[ s_n : \left\lbrace \begin{array}{ccc}
 \widehat{G}(k)^n \oplus \widehat{Y}(k) & \to & \widehat{G}(k)^{n-1} \oplus \widehat{Y}(k) \\
 (\phi_1,\cdots,\phi_n,\psi) & \mapsto & (-\phi_1,\phi_3,\cdots,\phi_n,\psi)
\end{array} \right. \] defines a homotopy in degree $\geqslant 2$ between the idendity map of $C_2$ and the zero map, that is : for all $n \geqslant 2$, one has $d_2^{n-1} \circ s_n + s_{n+1} \circ d_2^{n} = \mathrm{id}_{\widehat{G}(k)^n \oplus \widehat{Y}(k)}$, thus the cohomology of $C_2$ is zero in degree $\geqslant 2$. 
\end{proof}

\begin{lemma}
\label{identification complexe etape 2}
The projections $G^n \times_k E \to G^n$, $((g_i),x) \mapsto (g_i)$ identify together the complexes $(\h^1(G^n,\mathbb{G}_m))_{n \in \mathbb{N}}$ and $(\h^1(G^n \times_k E,\mathbb{G}_m))_{n \in \mathbb{N}}$.
\end{lemma}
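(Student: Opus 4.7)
The plan is to interpret the complexes as complexes of Picard groups (since $G^n$ and $G^n\times_k E$ are smooth, $\mathrm{H}^1(-,\mathbb{G}_m) = \pic$) and to prove that the family of first projections $p_n : G^n \times_k E \to G^n$ gives an isomorphism of complexes. The first thing to verify is that $(p_n)_n$ is simplicial with respect to the face maps $\delta'_i$ on $(G^n \times_k E)_n$ and the analogous face maps $\delta_i$ on $(G^n)_n$ (same formulas, but ignoring the $E$-coordinate). For $i \geqslant 1$ this is immediate because $\delta'_i$ does not touch the $E$-coordinate. For $i=0$ one computes $p_{n-1} \circ \delta'_0(g_1,\dots,g_n,x) = (g_2,\dots,g_n) = \delta_0 \circ p_n(g_1,\dots,g_n,x)$. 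Hence $(p_n^\ast)_n$ is a morphism of complexes, and it suffices to show that each $p_n^\ast : \pic(G^n) \to \pic(G^n \times_k E)$ is an isomorphism.

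For this, I would factor $p_n$ as the open immersion $\iota : G^n \times_k E \hookrightarrow G^n \times_k \mathbb{A}^{n^2}$ (coming from $E = \mathrm{GL}_n \subset \mathbb{A}^{n^2}$ as the complement of the determinant hypersurface) composed with the projection $q : G^n \times_k \mathbb{A}^{n^2} \to G^n$. Since $G^n$ is smooth, homotopy invariance of the Picard group yields that $q^\ast : \pic(G^n) \xrightarrow{\sim} \pic(G^n \times_k \mathbb{A}^{n^2})$ is an isomorphism. For $\iota^\ast$, the localization exact sequence reads
\[
\mathbb{Z} \cdot [G^n \times_k V(\det)] \longrightarrow \pic(G^n \times_k \mathbb{A}^{n^2}) \xrightarrow{\iota^\ast} \pic(G^n \times_k E) \longrightarrow 0 .
\]
But the Weil divisor $G^n \times_k V(\det)$ is the zero-locus of the regular function $\det$ pulled back from $\mathbb{A}^{n^2}$, hence it is principal. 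So $\iota^\ast$ is an isomorphism too, and consequently so is $p_n^\ast = \iota^\ast \circ q^\ast$.

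I do not expect any real obstacle: the two ingredients (homotopy invariance of $\pic$ for smooth schemes and the principality of the determinant divisor) are entirely standard. The only thing requiring some care is the verification that the projections commute with all face maps, in particular with $\delta'_0$ which is the only one mixing the $G$ and $E$ coordinates; once this is settled the rest is a straightforward diagram-free computation.
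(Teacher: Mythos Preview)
Your argument is correct. Both you and the paper reduce to showing that $p_n^\ast : \pic(G^n) \to \pic(G^n \times_k E)$ is an isomorphism and that the face maps match under this identification; the difference lies only in how the isomorphism of Picard groups is obtained. The paper invokes a general lemma of Colliot-Th\'el\`ene--Sansuc (or Sansuc) stating that for a smooth $k$-variety $V$ and a smooth \emph{$k$-rational} variety $W$ with a rational point, the projections induce $\pic(V)\oplus\pic(W)\xrightarrow{\sim}\pic(V\times_k W)$, and then uses $\pic(\mathrm{GL}_n)=0$. You instead exploit the concrete presentation $\mathrm{GL}_n = \mathbb{A}^{n^2}\smallsetminus V(\det)$ and combine homotopy invariance of $\pic$ with the localization sequence and the principality of the determinant divisor. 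Your route is more self-contained and avoids citing the external lemma, at the cost of being tailored to $E=\mathrm{GL}_n$; the paper's route would work verbatim for any smooth rational $E$ with trivial Picard group and a $k$-point. Your explicit verification that $p_{n-1}\circ\delta'_0=\delta_0\circ p_n$ at the level of schemes is also slightly cleaner than the paper's treatment of this point, which passes through the section $(i_n^x)^\ast$ and leaves the compatibility with $\delta'_0$ somewhat implicit.
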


\begin{proof}
One has an identification $\h^1(G^n \times_k Y,\mathbb{G}_m)=\pic(G^n \times_k Y)$. According to \cite[Lem. 11]{ColliotSansuc_Requivalence} (or \cite[Lem. 6.6.]{Sansuc_Brauer}) (note that $E=\mathrm{GL}_n$ is indeed rational over $k$), the following projections $\mu_n : G^n \times_k E \to G^n$ and $\nu_n : G^n \times_k E \to E$ induce an isomorphism $\mu_n^\ast \oplus \nu_n^\ast : \pic(G^n) \oplus \pic(E) \overset{\sim}{\to} \pic(G^n \times_k E )$. But $\pic(\mathrm{GL}_n)=0$, so the previous isomorphism is $\mu_n^\ast : \pic(G^n) \overset{\sim}{\to} \pic(G^n \times_k E )$, whose inverse is $(i_n^x)^\ast$ which is induced by any embedding $i_n^x : G^n \hookrightarrow G^n \times_k E$ built from any $k$-point $x$ of $E$. Thus, the maps $(\delta_i^\prime)^\ast : \pic(G^n \times_k E ) \to \pic(G^{n-1} \times_k E)$ obtained for $Y=E$ becomes $(\delta_i^\prime)^\ast : \pic(G^n) \to \pic(G^{n-1})$ obtained for $Y=\spec(k)$. Whence the lemma.
\end{proof}

Thanks to Lemmas \ref{cohomologie nulle etape 2} and \ref{identification complexe etape 2}, the second page of the spectral sequence \eqref{suite convergente etape 2} becomes (we write $\br$ for $\h^2(-,\mathbb{G}_m)$) \begin{equation}
\label{page 2 suite etape 2}
\end{equation} 
\begin{center}
\begin{tikzpicture}
\node (B) at (0,0) {$\h^0(k[G^\bullet \times_k Y]^\times)$};
\node (C) at (4,0) {$\h^1(k[G^\bullet \times_k Y]^\times)$};
\node (D) at (8,0) {$0$};
\node (E) at (11,0) {$0$};

\node (A1) at (-3,1.5) {$0$};
\node (B1) at (0,1.5) {$0$};
\node (C1) at (4,1.5) {{\small $\ker(\pic(G) \to \pic(G^2))$}};
\node (D1) at (8,1.5) {$ E_2^{2,1}$};

\node (A2) at (-3,3) {$0$};
\node (B2) at (0,3) {{\small $\ker(\br(Y) \to \br(G \times_k Y))$}};
\node (C2) at (4,3) {$E_2^{1,2}$};
\node (D2) at (8,3) {$ E_2^{2,2}$};

\draw[->] (A1) -- (C);
\draw[->] (B1) -- (D);
\draw[->] (C1) -- (E);
\draw[->] (A2) -- (C1);
\draw[->] (B2) -- (D1);

\draw[thick,dashed] (-2.5,0) -- (B);
\draw[thick,dashed] (B) -- (C);
\draw[thick,dashed] (C) --(D);
\draw[thick,dashed] (D) -- (E);
\draw[thick,dashed,->] (E) -- (11.5,0) node[below] {$p$};
\draw[thick,dashed] (0,-1) -- (B.south);
\draw[thick,dashed] (B.north) -- (B1.south);
\draw[thick,dashed] (B1.north) -- (B2.south);
\draw[thick,dashed,->] (B2.north) -- (0,4) node[right] {$q$};
\end{tikzpicture}.
\end{center} Thus, using the usual notations for spectral sequences and their limit terms (\cite[Chap. 5]{Weibel_homalg}), \begin{gather*} E_\infty^{0,2} =E_3^{0,2}=\ker \left( \ker(\br(Y) \to \br(G \times_k Y)) \to E_2^{2,1} \right), \\  E_\infty^{1,1}=E_2^{1,1}=\ker(\pic(G) \to \pic(G^2)) \end{gather*} and $E_\infty^{2,0}=0$. Also, since the spectral sequence \eqref{suite convergente etape 2} is convergent, one finds an exact sequence, natural in faithful representation $G \hookrightarrow \mathrm{GL}_n$ : \begin{equation}
 \label{suite exacte partielle etape 2} \begin{split}
 0 \to \ker\left( \pic(G) \to \pic(G^2) \right) & \to \h^2([Y|G]^\bullet,\mathbb{G}_m)  \to  \\
 & \ker \left( \ker(\br(Y) \to \br(G \times_k Y)) \to E_2^{2,1} \right) \to 0 . 
 \end{split}
 \end{equation}
 
The sequence \eqref{suite exacte partielle etape 2} will be study for $Y=\spec(k)$ and also for $Y=E$ in the following steps $2$ and $3$ respectiveley.

\paragraph{Step 2 : The exact sequence for $Y=\spec(k)$.} 

The sequence \eqref{suite exacte partielle etape 2} for $Y=E$ is : \begin{equation}
\label{suite exacte partielle etape 22} \begin{split} 0 \to \ker\left( \pic(G) \to \pic(G^2) \right) & \to \h^2(\mathrm{B}G^\bullet,\mathbb{G}_m)  \to \\
& \ker \left( \ker(\br(k) \to \br(G)) \to E_2^{2,1} \right) \to 0 .
\end{split}
\end{equation} By naturality of \eqref{suite exacte partielle etape 22} with respect to the unit morphism $\epsilon : \spec(k) \to G$ it comes that the second half of \eqref{suite exacte partielle etape 22} is actually the projection $\h^2(\mathrm{B}G^\bullet,\mathbb{G}_m)  = \br(k) \oplus \h^2(\mathrm{B}G^\bullet,\mathbb{G}_m) _0 \to \br(k) \to 0$.
 
It remains to state that $\ker (\pic(G) \to \pic(G^2))$ is actually the kernel of \[ (\delta_0^\prime)^\ast-(\delta_1^\prime)^\ast+(\delta_2^\prime)^\ast : \pic(G) \to \pic (G^2) .\] But this homomorphism is just $p_1 ^\ast + p_2^ \ast-m^\ast$ where $p_i$ is the projection of $G^2$ onto the $i$-th factor, $i=1$, $2$, and $m$ is the multiplication of $G$. So the kernel of $p_1 ^\ast + p_2^ \ast-m^\ast$ is $\mathrm{Ext}^1(G,\mathbb{G}_m)$, the isomorphism between the two groups being natural in $G$. The latter fact can be shown using exactly the same proof of \cite[Th. 5.6.]{Colliot_resolutions} : Colliot-Th\'el\`ene shows (in particular) that for a smooth and connected linear algebraic $k$-group $H$, assumed reductive when the characteristic of $k$ is positive, then $\ext(H,\mathbb{G}_m)=\pic(H)$, but the proof actually shows that for any smooth and connected linear algebraic $k$-group $H$, every element of $\mathrm{Ker}(p_1 ^\ast + p_2^ \ast-m^\ast)$ comes from an extension of $H$ by $\mathbb{G}_m$.
 
To sum up, one has a exact sequence, functorial with respect to the smooth and connected linear algebraic $k$-group $G$, \[ 0 \to \mathrm{Ext}^1(G,\mathbb{G}_m) \to \h^2(\mathrm{B}G_\bullet,\mathbb{G}_m)  \overset{\epsilon^\ast}{\to} \br(k)  \to 0  \] where $\epsilon : \spec(k) \to G$ is the unit morphism of $G$. This concludes Step $2$ of the proof of Proposition \ref{diagramme pour le thm}.

\paragraph{Step 3 : The exact sequence for $Y=E$.} 

Let's use again the sequence \eqref{suite exacte partielle etape 2} with $Y=E$ : \begin{equation}
\label{suite exacte partielle etape 3} \begin{split} 0 \to \ker\left( \pic(G) \to \pic(G^2) \right) & \to  \h^2([E|G]^\bullet,\mathbb{G}_m)  \to \\
&  \ker \left( \ker(\br(E) \to \br(G \times_k E)) \to E_2^{2,1} \right) \to 0 . 
\end{split}
\end{equation} But $\h^2([E|G]^\bullet,\mathbb{G}_m) \simeq \h^2(X,\mathbb{G}_m)=\br(X)$ (Proposition \ref{iso cohomologie simplicial}) and recall from Step $2$ that \[ \ker\left( \pic(G) \to \pic(G^2) \right) \simeq \mathrm{Ext}^1(G,\mathbb{G}_m), \] so the exact sequence \eqref{suite exacte partielle etape 3} induces the exact sequence : \begin{equation}
 \label{suite exacte etape 3} 
 0 \to \mathrm{Ext}^1(G,\mathbb{G}_m) \to \br(X)  \to  \br(E) .
 \end{equation}

In order to completly establish the second row in Proposition \ref{diagramme pour le thm}, it remains to explain why the arrow $\br(X)  \to  \br(E)$ from \eqref{suite exacte etape 3} is indeed $\pi^\ast$, where $\pi$ is the projection $E \to X$. This fact is directly implied by the comparison of \eqref{suite exacte partielle etape 3} for $[E|G]^\bullet$ and the same sequence for  $C_k X^\bullet$, the comparison being obtained by that of the spectral sequences \eqref{suite convergente etape 2}
 to \begin{equation*}
\label{suite convergente etape 3 X}
( E^\prime)^{p,q}= \h^q(X,\mathbb{G}_m) \Rightarrow \h^{p+q}(C_k X^\bullet,\mathbb{G}_m) .
 \end{equation*} with respect to the projection  $\Pi_\bullet : [E|G]^\bullet \to C_k X^\bullet$, $((g_i),e) \to \pi(e)$
 
\paragraph{Step 4 : Commutativity of Diagram \eqref{diagramme theoreme}.}

The commutativity of \eqref{diagramme theoreme} is given by the spectral sequence morphism \[ \begin{array}{ccc}
\h^q(\mathrm{B} G^p,\mathbb{G}_m) & \Rightarrow&  \h^{p+q}(\mathrm{B}G^\bullet,\mathbb{G}_m) \\
\downarrow & & \downarrow \\
\h^q([X|G]^p,\mathbb{G}_m) & \Rightarrow & \h^{p+q}([X|G]^\bullet,\mathbb{G}_m)
\end{array} .\] Indeed, the spectral sequence \eqref{Suite spectrale simpliciale} is natural with respect to the morphism $[X|G]^\bullet \to \mathrm{B}G^\bullet$ from \ref{sous section torseurs classifiant}.

\begin{center}
\underline{\textit{End of the proof for Proposition \ref{diagramme pour le thm}}}
\end{center}

\section{Applications}
\label{sect consequences}

In this section we discuss Theorem \ref{cor inv brauer} and derive some consequences from it.

		\subsection{Comments}
			\label{sous section cas unipotent}

\paragraph{The reductive case.}
In the case where the group $G/k$ is (connected) reductive, then combining \cite[Lem. 6.6.]{Sansuc_Brauer} and \cite[Th. 5.6]{Colliot_resolutions} it comes that the canonical homomorphism $\ext(G,\mathbb{G}_m) \to \pic(G)$ is an isomorphism and we recover \cite[Th. 2.4.]{MerkurjevBlinstein}. Moreover, for semisimple $G$, $\pic(G)=\ext(G,\mathbb{G}_m)\simeq \widehat{C}$ where ${C}$ is the kernel of the universal cover of $G$ and $\widehat{C}$ the group of homomorphisms $C \to \mathbb{G}_m$ defined over $k$.

\paragraph{New calculations.} We only announce here a small result determining the group of invariants with value in the Brauer group for some groups that are not reductive over a imperfect field, namely \emph{pseudo-semisimple} groups (see \cite{CGP}). The presentation of the calculations is to be published separatly because too long and linked to other subjects to be developped here. By determining the groups of extensions by $\mathbb{G}_m$ of certain pseudo-semisimple groups, we derive their groups of $\br$-invariants.

Assume $p$, the characteristic of $k$, is positive and denote by $\mathrm{R}_{K/k}$ the Weil restriction functor through the finite field extension $K/k$.

\begin{prop}
\label{prop inv groupes pss}
Let $k^\prime/k$ be a finite and purely inseparable field extension. Call $h$ the least integer $\geqslant 0$ such that $(k^\prime)^{p^h} \subseteq k$. Let $G^\prime$ be a semisimple, simply connected $k^\prime$-group and $\mu^\prime \subseteq G^\prime$ be a central subgroup. Letting $G:=\res(G^\prime)/\res(\mu^\prime)$, there is an isomorphism \[ \inv(G,\br)_0 \simeq p^h \widehat{\mu^\prime} \] where $\widehat{\mu^\prime}$ is the group of characters of $\mu^\prime$ defined over $k^\prime$. 
\end{prop}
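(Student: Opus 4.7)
By Theorem~\ref{cor inv brauer}, $\inv(G,\br)_0 \simeq \ext(G,\mathbb{G}_m)$, so the proposition reduces to the purely algebraic isomorphism $\ext(G,\mathbb{G}_m) \simeq p^h\widehat{\mu'}$. Since $\mu' \subseteq G'$ is central in a simply connected semisimple group, it is of multiplicative type, and $\res(\mu') \subseteq \res(G')$ is still central; applying $\mathrm{Hom}(-,\mathbb{G}_m)$ to the defining central extension
\[ 1 \to \res(\mu') \to \res(G') \to G \to 1 \]
yields the exact sequence
\[
0 \to \widehat{G} \to \widehat{\res(G')} \to \widehat{\res(\mu')} \to \ext(G,\mathbb{G}_m) \to \ext(\res(G'),\mathbb{G}_m).
\]

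The plan is to prove two vanishings and one explicit identification. First, $\widehat{\res(G')} = 0$: any $k$-character of $\res(G')$, base-changed to $k'$ and composed with the counit $\res(G')_{k'} \to G'$, becomes a $k'$-character of $G'$, which is trivial as $G'$ is semisimple, and a short argument lifts this to triviality over $k$. Second, $\ext(\res(G'),\mathbb{G}_m) = 0$: a central extension $1 \to \mathbb{G}_{m,k} \to \widetilde{H} \to \res(G') \to 1$ pushes out along the unit $\mathbb{G}_{m,k} \hookrightarrow \res(\mathbb{G}_{m,k'})$ to produce, via Weil-restriction adjunction, a central extension of $G'$ by $\mathbb{G}_{m,k'}$ over $k'$; this splits because $\ext_{k'}(G',\mathbb{G}_m) = \pic(G') = 0$ for simply connected semisimple $G'$ by \cite[Th.~5.6]{Colliot_resolutions}, and one then descends the splitting back across $\res$ using the pseudo-reductive structure theory of \cite{CGP}. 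These vanishings give $\ext(G,\mathbb{G}_m) \simeq \widehat{\res(\mu')}$.

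Third, identify $\widehat{\res(\mu')} \simeq p^h\widehat{\mu'}$: for $\chi \in \widehat{\mu'}$, the composite
\[
\mathrm{Nm} \circ \res(\chi)\colon \res(\mu') \longrightarrow \res(\mathbb{G}_{m,k'}) \longrightarrow \mathbb{G}_{m,k}
\]
(with $\mathrm{Nm}$ the norm) defines a $k$-rational character of $\res(\mu')$, and every $k$-character of $\res(\mu')$ arises this way. Because $\mu'$ is finite of multiplicative type and $(k')^{p^h} \subseteq k$, the norm map acts on characters as a $p^h$-th power across the purely inseparable extension; one checks that the resulting character descends to $\mathbb{G}_{m,k}$ if and only if $\chi$ is itself a $p^h$-th power in $\widehat{\mu'}$, which gives the subgroup $p^h\widehat{\mu'}$.

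The main obstacle is the vanishing $\ext(\res(G'),\mathbb{G}_m) = 0$: \cite[Th.~5.6]{Colliot_resolutions} applies only to reductive groups, whereas $\res(G')$ is in general only pseudo-reductive, so genuine structural input from \cite{CGP} is required. Together with the precise computation of $\widehat{\res(\mu')}$ via the norm across the inseparable extension, this is the technical heart of the calculation and what the author defers to a separate paper.
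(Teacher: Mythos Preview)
The paper does not actually prove this proposition: the author explicitly announces it as a result whose ``presentation of the calculations is to be published separately because too long and linked to other subjects''. So there is no proof in the paper to compare your proposal against; what you have written is a plausible reconstruction of the intended argument.

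Your overall strategy is the natural one and very likely matches what the author has in mind: reduce via Theorem~\ref{cor inv brauer} to computing $\ext(G,\mathbb{G}_m)$, then run the long exact sequence attached to the central extension $1 \to \res(\mu') \to \res(G') \to G \to 1$. The two vanishings $\widehat{\res(G')}=0$ and $\ext(\res(G'),\mathbb{G}_m)=0$, together with the identification of $\widehat{\res(\mu')}$, are exactly the right subgoals, and you are honest that the second vanishing is the substantial step requiring genuine input from the pseudo-reductive theory of \cite{CGP}.

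One small slip worth fixing: in your argument for $\widehat{\res(G')}=0$, the counit $\res(G')_{k'} \to G'$ goes the wrong way to turn a character of $\res(G')_{k'}$ into a character of $G'$ (both maps have source $\res(G')_{k'}$, so they do not compose). You would instead precompose with the canonical closed immersion $G' \hookrightarrow \res(G')_{k'}$ and then still need to argue that triviality on this subgroup forces triviality everywhere. A cleaner route avoids this entirely: $G'$, being simply connected semisimple, is perfect, and Weil restriction preserves commutators, so $\res(G')$ is perfect and hence has no nontrivial characters. With that correction, your outline is sound as an outline, and the points you flag as deferred are precisely the content the author defers.
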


In particular, let $m$ be a positive integer and let $k^\prime/k$ and $h$ be as in the proposition. There is an isomorphism : \[ \inv \left( \res(\mathrm{SL}_{p^m,k^\prime})/\res(\mu_{p^m,k^\prime}), \br \right) \simeq p^h \cdot \mathbb{Z}/p^m , \] where \begin{enumerate}
\item $\mathrm{SL}_{p^m,k^\prime}$ is the special linear group of size $p^m$ over $k^\prime$;
\item $\mu_{p^m,k^\prime}$ is the group scheme of $p^m$-rooth of unity inside $\mathrm{SL}_{p^m,k^\prime}$.
\end{enumerate}  One sees that if $h \geqslant m$, then the group of invariants is trivial.

\paragraph{The case of unipotent groups.}
Let's talk about the Picard group and the group of extensions by $\mathbb{G}_m$ in the case of smooth and connected unipotent groups. We only state some results from other authors. Together with Theorem \ref{cor inv brauer}, these results give new information on the group of invariants with value in the Brauer group for unipotent groups, especially over imperfect fields. We see that in this case, the group of invariants is very different from the one for reductive groups or for groups over a perfect field.

Indeed, over a perfect field $k$, smooth and connected unipotent $k$-groups have composition series whose quotients are addidive group $\mathbb{G}_a^n$. The Picard groups of such groups are thus trivial and they have no non constant invariant. Things are more exotic over imperfect base fields.

Achet (\cite[Th. 2.4]{Achet_picard}) and Totaro (\cite[Lem. 9.3 \& 9.4]{Totaro_pseudoabelian}) began by showing that the Picard groups are non zero :

\begin{prop}
Let $G$ be a non trivial $k$-form of $\mathbb{G}_a$. Then $\pic(G) \neq 0$. Moreover, if $k$ is separably closed then certainly $\mathrm{Ext}^1(G,\mathbb{G}_m) \neq 0$.
\end{prop}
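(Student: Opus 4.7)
The plan is to follow the compactification strategy of Achet and Totaro. Since $G$ is a smooth geometrically integral affine curve with $G_{\bar k}\simeq \mathbb{A}^1_{\bar k}$, it admits a unique regular projective completion $\bar G$ over $k$ with $\bar G_{\bar k}\simeq \mathbb{P}^1_{\bar k}$, and whose complement $Z:=\bar G\setminus G$ is a single closed point $\infty$. The non-triviality of $G$ forces $\bar G$ to fail to be smooth at $\infty$: if $\bar G$ were smooth it would be a genus zero curve with a $k$-rational point (the identity of $G$), so $\bar G\simeq \mathbb{P}^1_k$ and $G\simeq \mathbb{G}_a$, contradicting the hypothesis.

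Next, I would exploit the localisation exact sequence
\[ \mathbb{Z}\cdot [\infty] \to \pic(\bar G) \to \pic(G) \to 0 . \]
Realising $G$ concretely as a Russell form $\{y^{p^n}=x+a_1x^p+\cdots+a_m x^{p^m}\}$ with at least one $a_i\notin k^p$, the local ring $\mathcal{O}_{\bar G,\infty}$ and its normalisation can be described explicitly, and the singularity at $\infty$ contributes a non-trivial unipotent piece to $\pic^0(\bar G)$. A direct calculation---the technical heart of Achet's argument---then shows that the cyclic image of $[\infty]$ does not exhaust $\pic(\bar G)$, yielding $\pic(G)\neq 0$.

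For the second claim, assume $k$ is separably closed (so automatically imperfect, since non-trivial forms of $\mathbb{G}_a$ exist). I would follow Totaro's construction of an explicit central extension of $G$ by $\mathbb{G}_m$ whose underlying line bundle is the one produced above. The cocycle description $\ext(G,\mathbb{G}_m)=\ker\bigl(p_1^\ast+p_2^\ast-m^\ast:\pic(G)\to \pic(G\times G)\bigr)$ recalled in Step 2 of the proof of Proposition \ref{diagramme pour le thm}, combined with the vanishing of $\widehat{G}$ (since $G$ is connected unipotent) and the separable closedness of $k$, reduces the problem to verifying $m^\ast L\simeq p_1^\ast L\otimes p_2^\ast L$ for the chosen line bundle $L$; this can be checked directly by pulling back through the Russell equation on $\bar G\times \bar G$.

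The hardest step will be the explicit computation showing that $[\infty]$ does not generate all of $\pic(\bar G)$---equivalently, building a concrete non-trivial line bundle on $G$ from the non-smoothness at infinity via the Russell presentation. Once this is achieved, the second assertion should follow by a routine cocycle verification in the separably closed setting, though careful bookkeeping of the descent data at the non-smooth point $\infty$ remains the subtle point of the calculation.
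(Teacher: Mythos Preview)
The paper does not actually prove this proposition: it is stated with attribution to Achet \cite[Th.~2.4]{Achet_picard} and Totaro \cite[Lem.~9.3 \& 9.4]{Totaro_pseudoabelian} and no proof is given in the text. So there is no ``paper's own proof'' to compare your proposal against; you are effectively sketching the cited external arguments.

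Your outline is broadly in the right spirit, but one point is muddled. You speak of the singularity at $\infty$ contributing a ``non-trivial unipotent piece to $\pic^0(\bar G)$''; however, you have (correctly) taken $\bar G$ to be the \emph{regular} projective completion, so $\bar G$ is a regular curve and there is no singularity in the scheme-theoretic sense---only a failure of smoothness over $k$. The cleaner mechanism is purely numerical: regularity plus non-smoothness at $\infty$ forces $\kappa(\infty)/k$ to be inseparable, hence $d:=[\kappa(\infty):k]>1$; since the identity $e\in G(k)$ gives a degree-$1$ divisor on $\bar G$, the localisation sequence $\mathbb{Z}\cdot[\infty]\to\pic(\bar G)\to\pic(G)\to 0$ yields a surjection $\pic(G)\twoheadrightarrow \mathbb{Z}/d\mathbb{Z}\neq 0$. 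This is essentially Totaro's argument and avoids any analysis of $\pic^0$. For the second claim your plan is reasonable, though the ``routine cocycle verification'' of $m^\ast L\simeq p_1^\ast L\otimes p_2^\ast L$ is where the real work lies; Totaro's Lemma~9.4 handles the separably closed case by a different (and shorter) route via commutative pseudo-reductive extensions, which you may find more efficient than a direct cocycle check.
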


\begin{ex}
The first examples of non trivial form of $\mathbb{G}_a$ over a imperfect base field are subgroups $\{ (x,y) \, | \, y^p=x+ax^p \}$ of $\mathbb{G}_a^2$ where $a$ is any element in $k \setminus k^p$.
\end{ex}

Rosengarten went further, showing the following propositions. The group $\mathrm{Ext}^1(G,\mathbb{G}_m)$ can be infinite :

\begin{prop}[{\cite[Prop. 5.7]{Rosengarten_picard}}]
\label{prop pic unipotent infini}
Suppose $k$ is imperfect and separably closed (\textit{i.e.} $k=k_s$). If the characteristic of $k$ is not $2$, then for any $k$-form $G$ of the additive group $\mathbb{G}_a$, the group $\mathrm{Ext}^1(G,\mathbb{G}_m)$ is finite if and only if $G \simeq \mathbb{G}_a$. In particular, for all $a \in k \setminus k^p$, writing $G=\{ (x,y) \, | \, y^p=x+ax^p \} \subset \mathbb{G}_a^2$, the group $\ext(G,\mathbb{G}_m)$ is infinite.
\end{prop}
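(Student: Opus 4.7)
My plan is to treat the two directions of the equivalence separately, leaning on the Russell–Tits classification of smooth connected $1$-dimensional unipotent $k$-groups and on Breen's computations of $\mathrm{Ext}^\ast$ between commutative algebraic groups. The input from Russell–Tits is that over a separably closed $k$, any such form $G$ fits into an exact sequence $0 \to G \to \mathbb{G}_a^n \xrightarrow{F} \mathbb{G}_a^{n-1} \to 0$ with $F$ a $p$-polynomial, the non-triviality of the form being equivalent to $F$ being non-linear and involving at least one coefficient in $k \setminus k^p$. The paradigm $G = \{y^p = x + ax^p\}$ fits with $n=2$ and $F(x,y) = y^p - x - ax^p$.

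For the ``if'' direction, that $\ext(\mathbb{G}_a,\mathbb{G}_m)$ is finite, I would invoke the classical vanishing $\ext(\mathbb{G}_a,\mathbb{G}_m)=0$ over a separably closed field of positive characteristic (SGA~3, Exp.~XVII; see also Breen). This disposes of the case $G \simeq \mathbb{G}_a$ immediately.

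For the ``only if'' direction, applying $\mathrm{RHom}(-,\mathbb{G}_m)$ on the fppf site to the Russell sequence, together with the vanishing of $\ext(\mathbb{G}_a^m,\mathbb{G}_m)$ modulo a controlled biextension contribution, produces (up to a finite error term) an isomorphism
\[ \ext(G,\mathbb{G}_m) \;\simeq\; \ker\!\bigl(F^\ast : \mathrm{Ext}^2(\mathbb{G}_a^{n-1},\mathbb{G}_m) \to \mathrm{Ext}^2(\mathbb{G}_a^n,\mathbb{G}_m)\bigr). \]
Using Breen's presentation of $\mathrm{Ext}^2(\mathbb{G}_a,\mathbb{G}_m)$ via symmetric $p$-polynomial $2$-cocycles, the task becomes to construct an explicit injective $\mathbb{F}_p$-linear map from the infinite-dimensional space $k/k^p$ into $\ker(F^\ast)$. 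For each $b \in k$, one builds a cocycle manufactured out of $b$ and the coefficients of $F$; the non-linearity of $F$ combined with the imperfect coefficient appearing in it should force these classes to survive pullback.

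The main obstacle is exactly the injectivity of this assignment $k/k^p \to \ker(F^\ast)$: one must show that no non-zero $\mathbb{F}_p$-linear combination of the proposed cocycles becomes a coboundary after pullback by the non-linear $p$-polynomial $F$. This amounts to a sensitive computation with $p$-polynomials in which the imperfectness of $k$ is the essential input. The characteristic $\neq 2$ hypothesis I expect to enter through the symmetric/alternating dichotomy for $2$-cocycles, which collapses in characteristic~$2$ and could make some otherwise non-trivial classes vanish. An alternative route, bypassing explicit cocycles, would be to exploit the isomorphism $\ext(G,\mathbb{G}_m) \simeq \inv(G,\br)_0$ of Theorem~\ref{cor inv brauer} and instead produce an infinite family of pairwise distinct Brauer invariants of $G$ parametrized by $k/k^p$; the main difficulty, however, is the same in spirit.
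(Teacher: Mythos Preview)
The paper does not prove this proposition at all: it is stated with a citation to \cite[Prop.~5.7]{Rosengarten_picard} and no argument is given. So there is nothing in the paper to compare your attempt against.

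As for your sketch on its own terms: the ``if'' direction is fine, and the overall strategy for ``only if'' via the Russell--Tits presentation and the long exact sequence for $\mathrm{RHom}(-,\mathbb{G}_m)$ is a reasonable line of attack. But what you have written is a plan, not a proof. You yourself identify the crux --- producing an injection from $k/k^p$ into $\ker(F^\ast)$ on $\mathrm{Ext}^2$ and verifying that none of the classes die --- and you do not carry it out. That step is genuinely the content of the result; the surrounding d\'evissage is soft. Note also that your appeal to ``Breen's presentation of $\mathrm{Ext}^2(\mathbb{G}_a,\mathbb{G}_m)$'' needs to be made precise (which topology, which category of sheaves, and exactly which of Breen's results you are invoking), since the answer is sensitive to these choices. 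Finally, your proposed alternative via Theorem~\ref{cor inv brauer} and Brauer invariants does not actually bypass the difficulty: constructing infinitely many independent normalized $\br$-invariants of $G$ is, by that very theorem, equivalent to constructing infinitely many independent classes in $\ext(G,\mathbb{G}_m)$, so you would be proving the same thing in different language. If you want a complete argument, consult Rosengarten's paper directly.
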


Moreover the natural homomorphism $\mathrm{Ext}^1(G,\mathbb{G}_m) \to \pic(G)$ is not onto in general for smooth and connected unipotent groups :

\begin{prop}[{\cite[Prop. 5.10]{Rosengarten_picard}}]
\label{non iso extension picard}
Let $G$ be a $k$-form of $\mathbb{G}_a$. Suppose the characteristic of $k$ is different from $2$. Then the natural homomorphism $\mathrm{Ext}^1(G,\mathbb{G}_m) \to \pic(G)$ is an isomorphism if and only if  $G \simeq \mathbb{G}_a$.
\end{prop}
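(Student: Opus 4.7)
The plan is to compare cardinalities of the two groups: the map is an isomorphism iff both are finite of the same order, and I will show that in the non-trivial case, $\pic(G)$ is finite while $\ext(G,\mathbb{G}_m)$ is infinite.

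For the ``if'' direction, suppose $G \simeq \mathbb{G}_a$. Then $\pic(\mathbb{G}_a) = \pic(\mathbb{A}^1_k) = 0$ by factoriality; and $\ext(\mathbb{G}_a, \mathbb{G}_m) = 0$ because any such extension has trivial underlying $\mathbb{G}_m$-torsor (since the Picard group vanishes), hence is given by a $2$-cocycle $c \in k[\mathbb{G}_a^2]^\times$; by Rosenlicht's lemma $c \in k^\times$ is a constant, which is visibly the coboundary of a constant function $\mathbb{G}_a \to \mathbb{G}_m$. Both groups being zero, the map is trivially an isomorphism.

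For the converse, assume $G$ is a non-trivial $k$-form of $\mathbb{G}_a$; then $k$ is necessarily imperfect, since every smooth connected $1$-dimensional unipotent group over a perfect field is isomorphic to $\mathbb{G}_a$. Finiteness of $\pic(G)$ is Achet's explicit computation \cite[Th.~2.4]{Achet_picard}, which realises it as a finite $p$-group in terms of the $p$-polynomial cutting $G$ out of $\mathbb{G}_a^n$. Infinitude of $\ext(G, \mathbb{G}_m)$ over separably closed base is Proposition~\ref{prop pic unipotent infini}; for general imperfect $k$, I would reduce to $k_s$, noting that in the examples of interest $G_{k_s}$ remains a non-trivial form (the standard trivializations $G = \{y^p = x + ax^p\}$ for $a \in k \setminus k^p$ require adjoining $a^{1/p}$, which is inseparable over $k$). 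Then I would use a Hochschild--Serre-type descent for $\ext$ to transfer infinitude downstairs, the correction terms being controlled by Galois cohomology of the finitely generated character groups $\widehat{G}(k_s)$ and $k_s^\times$, which cannot absorb an infinite quantity.

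The main obstacle is this descent step from $k_s$ to $k$ for the group $\ext$. While Achet's Picard computation is well-behaved under separable base change, infinitude of Baer-sum classes is not automatically preserved, so one must either carefully construct an infinite family of Galois-invariant extensions already defined over $k$, or bound the error terms in the descent spectral sequence. The $\mathrm{char}(k) \neq 2$ hypothesis is inherited from Proposition~\ref{prop pic unipotent infini}, where it controls $2$-torsion in the constructions of \cite{Rosengarten_picard}.
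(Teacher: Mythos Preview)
Your strategy contains a fatal contradiction. You want $\pic(G)$ finite while $\ext(G,\mathbb{G}_m)$ is infinite, but for any smooth connected unipotent $k$-group $G$ the natural map $\ext(G,\mathbb{G}_m) \to \pic(G)$ is \emph{injective}. Indeed, an extension with trivial underlying $\mathbb{G}_m$-torsor is governed by a normalized $2$-cocycle $c \in k[G \times G]^\times$; since $\widehat{G}=0$ for unipotent $G$, Rosenlicht forces $c \in k^\times$, and the normalization $c(e,-)=1$ then gives $c=1$. (This is precisely the argument you gave for the ``if'' direction, which is correct.) Hence $\ext(G,\mathbb{G}_m)$ infinite forces $\pic(G)$ infinite as well --- a fact the paper itself notes just after Proposition~\ref{prop annulation p}. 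You have misread Achet's theorem: in the paper's account it establishes $\pic(G) \neq 0$ for non-trivial forms, not finiteness.

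The real content of the proposition is therefore failure of \emph{surjectivity}: for $G \not\simeq \mathbb{G}_a$ one must exhibit a line bundle on $G$ that does not satisfy the multiplicativity condition $m^\ast L \simeq p_1^\ast L \otimes p_2^\ast L$ in $\pic(G^2)$, i.e.\ does not arise from a central extension. This requires an explicit construction and cannot be obtained by comparing cardinalities. Note also that the paper does not supply its own proof of this proposition; it is quoted from \cite{Rosengarten_picard}, so there is no in-paper argument to compare against.
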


Nevertheless there is the following restriction on the Picard group (and hence on the group of invariants with value in the Brauer group) :

\begin{prop}
\label{prop annulation p}
Let $G$ be a smooth and connected unipotent group over a field $k$. Then $\pic(G)$ is $q$-torsion for some power $q$ of the characteristic of $k$.
\end{prop}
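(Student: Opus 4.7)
The plan is a dévissage on the length of a composition series for $G$, reducing to the one-dimensional case. We may assume $p := \mathrm{char}(k) > 0$, since in characteristic zero any smooth connected unipotent $k$-group is $k$-isomorphic to affine space and thus has trivial Picard group. By the structure theory of smooth connected unipotent groups over an arbitrary field (see \cite[App.~B]{CGP}), $G$ admits a normal series
\[ 1 = G_0 \trianglelefteq G_1 \trianglelefteq \cdots \trianglelefteq G_r = G \]
of smooth connected closed subgroups whose successive quotients $G_i/G_{i-1}$ are $k$-forms of $\mathbb{G}_a$. I argue by induction on $r$.

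The base case $r=1$ is when $G$ is itself a $k$-form of $\mathbb{G}_a$, and the work of Achet (\cite[Th.~2.4]{Achet_picard}) refined by Rosengarten (underlying \cite[Prop.~5.7]{Rosengarten_picard}) yields that $\pic(G)$ has finite exponent equal to a power of $p$.

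For the inductive step, consider the short exact sequence $1 \to H \to G \overset{\pi}{\to} Q \to 1$ with $H = G_1$ a $k$-form of $\mathbb{G}_a$ and $Q = G/G_1$. The morphism $\pi$ is an $H$-torsor, étale-locally trivial as $H$ is smooth, so the Leray spectral sequence for $\mathbb{G}_m$ on the étale site,
\[ E_2^{p,q} = \h^p(Q, R^q \pi_\ast \mathbb{G}_m) \Rightarrow \h^{p+q}(G, \mathbb{G}_m), \]
yields the five-term exact sequence
\[ 0 \to \pic(Q) \to \pic(G) \to \h^0(Q, R^1 \pi_\ast \mathbb{G}_m). \]
By induction, $\pic(Q)$ is killed by some $p^b$. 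The stalks of $R^1 \pi_\ast \mathbb{G}_m$ at geometric points of $Q$ are controlled by the Picard groups $\pic(H_L)$ for appropriate residue-field extensions $L/k$; a uniform $p$-power bound $p^a$ on the exponent of these groups then makes the third term $p^a$-torsion, so that $p^{a+b}$ annihilates $\pic(G)$, completing the induction.

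The main obstacle is precisely this uniformity of the exponent of $\pic(H_L)$ as $L/k$ varies. It can be secured by the observation that a $k$-form $H$ of $\mathbb{G}_a$ is split by a fixed finite purely inseparable extension $k'/k$ of exponent $p^h$; the base change $H_L$ is then split by $L \cdot k'$, whose purely inseparable exponent over $L$ remains at most $p^h$. Rosengarten's exponent bound depends only on this combinatorial datum, providing the required uniform $p^a$ valid for all $L$.
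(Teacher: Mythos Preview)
Your approach is far more elaborate than necessary, and has a few rough edges. The paper's proof is a two-line argument: there exists a finite purely inseparable extension $k'/k$ over which $G$ becomes split unipotent (composition series with quotients $\mathbb{G}_a$), hence $\pic(G_{k'})=0$; then by \cite[Lem.~2.4]{Brion_linearization} the kernel of the base-change map $\pic(G)\to\pic(G_{k'})$ is annihilated by $[k':k]$, a power of $p$. No dévissage, no Leray.

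Ironically, your final paragraph already contains this mechanism (splitting over a fixed purely inseparable extension, exponent controlled by its degree), but you apply it only to the one-dimensional pieces inside an inductive scheme rather than directly to $G$ itself.

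Two specific issues with your write-up. First, your citations for the base case are off: \cite[Th.~2.4]{Achet_picard} asserts \emph{nontriviality} of $\pic(G)$ for a nontrivial form of $\mathbb{G}_a$, not an exponent bound, and \cite[Prop.~5.7]{Rosengarten_picard} concerns (in)finiteness, not $p$-power torsion. The base case actually needs exactly the splitting-plus-transfer argument you sketch at the end. Second, the stalk of $R^1\pi_\ast\mathbb{G}_m$ at a geometric point $\bar q$ is $\pic\bigl(H\times_k \spec \mathcal{O}_{Q,\bar q}^{sh}\bigr)$, not $\pic(H_L)$ for a field $L$; bounding its exponent requires the transfer argument over the strictly henselian base rather than over $k$. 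This can be made to work, but it is the same input as the paper's direct proof, so the dévissage ultimately buys nothing.
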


\begin{proof}
There exists a finite purely inseparable extension $k^\prime/k$ such that $G_{k^\prime}$ is split as a unipotent group (that is $G_{k^\prime}$ has a composition series whose quotients are some $\mathbb{G}_a^r$). Thus $\pic(G_{k^\prime})=0$. But by \cite[Lem. 2.4]{Brion_linearization}, the kernel of $\pic(G) \to \pic(G_{k^\prime})$ obtained by scalar extension, is annihilated by the degree of $k^\prime/k$, which is a power of the characteristic of $k$.
\end{proof}

Since the Picard group of a smooth and connected unipotent group may be infinite, the previous proposition implies that when the Picard group is indeed infinite, then it can't be finitely generated.

\vspace{\baselineskip}

Due to the results stated above, it appears that one can't replace $\mathrm{Ext}^1(\star,\mathbb{G}_m)$ by $\pic(\star)$ to describe the group of invariants in Theorem \ref{cor inv brauer}. Also, the theorem implies that there exists smooth and connected linear algebraic group with infinite group of invariants.

\begin{ex}
Assume $k$ is separably closed and let $G$ be a $k$-form of $\mathbb{G}_a$ such that $\mathrm{Ext}^1(G,\mathbb{G}_m)$ is infinite. According to \cite[Cor. 95]{Totaro_pseudoabelian} there exists a commutative pseuso-reductive group $E$ which is an extension of $G$ by $\mathbb{G}_m$. Doing as in \cite[Ex. 5.14]{Rosengarten_picard} but working with $\mathrm{Ext}^1(\star,\mathbb{G}_m)$ instead of $\pic(\star)$, one shows that $\mathrm{Ext}^1(E,\mathbb{G}_m)$ is also infinite. In that case $\inv(E,\br)_0$ is infinite and $E$ is not unipotent.
\end{ex}

			\subsection{Invariants given by connecting morphisms.}

Besides what was done in section \ref{section presentation}, there is an elementary and classical receipe to define invariants of an algebraic group $G$ with values in the Brauer group. It is showed here that this elementary receipe gives all the invariants of $G$ in $\br$.

First of all let's recall the construction as it can be found in \cite[\S 31.B]{KMRT}. It seems that the construction has no name in the literature and we suggest to name it the \emph{connection construction}, which gives the \emph{connection morphism} (since it is based on connection morphisms of cohomology long exact sequences arising from short exact sequences of some sheaves).

\paragraph{\textit{Connection construction.}} 
Let $G$ be a smooth and connected algebraic linear $k$-group. Let $\mathcal{E}$ be an element in the group extension $\mathrm{Ext}^1(G,\mathbb{G}_m)$ represented by an extension \[ (E) \: \: \: 1 \to \mathbb{G}_m \to G^\prime \to G \to 1 . \] For all field extension $K/k$ one has the extension $E_K$ obtained by scalar extension from $E$. Since $G$ is connected, $E_K$ has to be a central extension. Thus for all $K/k$, $E_K$ induces a long exact sequence of pointed sets in Galois cohomology \[ 0 \to k^\times \to \cdots \to \h^1(K,G) \overset{\delta^E_K}{\longrightarrow} \h^2(K,\mathbb{G}_m)  \]  and $\delta^E_K$ is a map natural in $G$ and $K$ which does not depend on the representative $E$ of $\mathcal{E}$. All the maps $\delta^E_K$ for all $K/k$ determines a cohomological invariant of $G$ with values in the Brauer group. It is even a normalized invariant, for the arrows $\delta^E_K$ map the class of the trivial torsor to $0$. Write $\delta^\mathcal{E}$ for the invariant defined from $\mathcal{E} \in \mathrm{Ext}^1(G,\mathbb{G}_m)$, and $\delta=\delta_G$ for the map $\mathcal{E} \to \delta^\mathcal{E}$, $\mathrm{Ext}^1(G,\mathbb{G}_m) \to \inv(G,\br)_0$. 

The homomorphism $\delta$ is indeed a group homomorphism. We propose to name $\delta=\delta_G$ the \emph{connection morphism (of $G$)}.

\paragraph{Connection invariants.}

Here is the general result regarding the connection construction.

\begin{prop}
\label{prop inv classiques}
Let $G$ be a smooth and connected linear algebraic $k$-group. Then the homomorphism $\delta : \mathrm{Ext}^1(G,\mathbb{G}_m) \to \inv(G,\br)_0$ given by the connection construction is bijective.
\end{prop}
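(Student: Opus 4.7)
The plan is to derive this from Theorem \ref{cor inv brauer}, which already produces an isomorphism $\Psi : \ext(G,\mathbb{G}_m) \overset{\sim}{\to} \inv(G,\br)_0$. It will suffice to show that $\delta$ equals $\Psi$, for then bijectivity is automatic. Recall how $\Psi$ is built: an extension $E : 1 \to \mathbb{G}_m \to G^\prime \to G \to 1$ gives a $\mathbb{G}_m$-torsor $G^\prime \to G$ whose class $[G^\prime \to G]$ lies in the primitive subgroup $\ker(m^\ast - p_1^\ast - p_2^\ast) \subseteq \pic(G)$; by the spectral-sequence argument of Step 2 in the proof of Proposition \ref{diagramme pour le thm} applied to $\mathrm{B}G^\bullet$, this element lifts uniquely modulo $\br(k)$ to a class $\alpha_E \in \h^2(\mathrm{B}G^\bullet,\mathbb{G}_m)_0$, and then $\Psi(E) = \xi_{\alpha_E}$ via \eqref{morphisme classifiant 2}.

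The core task is to verify that $\xi_{\alpha_E}(Y/K) = \delta^E_K([Y])$ for every $G$-torsor $Y \to \spec(K)$. By definition, $\xi_{\alpha_E}(Y/K) = \phi_{Y/K}^\ast(\alpha_E) \in \h^2([Y|G]^\bullet,\mathbb{G}_m) \simeq \br(K)$. Applying Proposition \ref{Suite spectrale simpliciale} to $[Y|G]^\bullet$ and invoking its naturality along $\phi_{Y/K}$, the pullback $\phi_{Y/K}^\ast(\alpha_E)$ is the image, through the spectral-sequence edge map for $[Y|G]^\bullet$, of the pullback $\mathbb{G}_m$-torsor class in $\pic(Y)$ coming from $[G^\prime \to G]$, i.e. of the class of the contracted product $G^\prime \times^G Y \to Y$. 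Via the identification $\h^2([Y|G]^\bullet,\mathbb{G}_m) \simeq \br(K)$ of Proposition \ref{iso cohomologie simplicial}, this is precisely the class of the $\mathbb{G}_m$-gerbe on $\spec(K)$ obtained by descending $G^\prime \times^G Y \to Y$ along $Y \to \spec(K)$. But by the classical description of the connecting homomorphism (see \cite[\S 31.B]{KMRT}), $\delta^E_K([Y]) \in \br(K)$ is defined to be exactly this obstruction class to lifting $Y$ through the central extension $E_K$. Hence the two expressions agree.

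The main obstacle is the explicit diagram chase on the $E_2$-page of the spectral sequence for $[Y|G]^\bullet$ that matches its edge homomorphism on the $E_2^{1,1}$-term with the classical connecting map $\h^1(K,G) \to \h^2(K,\mathbb{G}_m)$, with due vigilance about sign conventions and the passage from Zariski to \'etale cohomology. Once the identification $\delta = \Psi$ is established, the bijectivity of $\delta$ is immediate from Theorem \ref{cor inv brauer}; functoriality of $\Psi$ in $G$ (Theorem \ref{cor inv brauer}) matches the evident functoriality of $\delta$, which is a reassuring consistency check.
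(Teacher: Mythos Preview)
Your strategy is precisely the one the paper explicitly \emph{declines} to follow. Just before its own proof the author writes: ``Rather than trying to determine explicitly whether the isomorphism of Theorem \ref{cor inv brauer} is actually $\delta$ (this requires to calculate explicitly many homomorphisms hidden in spectral sequences) we bypass the difficulties with the following proof based on the functoriality in Theorem \ref{cor inv brauer}.'' So you and the paper take genuinely different routes.

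The paper's argument never compares $\delta$ and $\Psi$ pointwise. For injectivity: if $\delta^{\mathcal{E}}=0$ then $\h^1(K,G')\to\h^1(K,G)$ is surjective for all $K$, hence $\inv(G,\br)_0\to\inv(G',\br)_0$ is injective, hence by functoriality of Theorem \ref{cor inv brauer} so is $\ext(G,\mathbb{G}_m)\to\ext(G',\mathbb{G}_m)$; but $\mathcal{E}$ lies in that kernel, so $\mathcal{E}=0$. For surjectivity: the composite $\sigma_G:=\Psi^{-1}\circ\delta$ is an injective natural endomorphism of $\ext(\,\cdot\,,\mathbb{G}_m)$; given $\mathcal{E}$ represented by $1\to\mathbb{G}_m\to H\to G\to 1$, naturality along $H\to G$ and injectivity of $\sigma_H$ force $\sigma_G$ to map the finite cyclic group $\langle\mathcal{E}\rangle=\ker(\ext(G,\mathbb{G}_m)\to\ext(H,\mathbb{G}_m))$ bijectively to itself, so $\mathcal{E}$ is in the image. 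This is short and avoids all spectral-sequence bookkeeping.

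Your approach is reasonable in principle, but as written it is a sketch rather than a proof: the paragraph beginning ``The main obstacle\ldots'' openly defers the entire content of the argument. Moreover, some of the intermediate claims are imprecise. Under $\phi_{Y/K}^\ast$ the class $\alpha_E$, which lives in the $E_\infty^{1,1}$-piece for $\mathrm{B}G^\bullet$, is sent to the corresponding $E_\infty^{1,1}$-piece for $[Y|G]^\bullet$, and that is a subquotient of $\pic([Y|G]^{(1)})=\pic(Y\times_K Y)$, not of $\pic(Y)$; there is no natural map $Y\to G$ along which to pull back $[G'\to G]$, so the description ``the class of $G'\times^G Y\to Y$ in $\pic(Y)$'' needs to be made precise. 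Identifying the resulting element of $\br(K)$ with the cocycle-level boundary map of \cite[\S 31.B]{KMRT} is exactly the computation the author judged too cumbersome. If you want to pursue this route, you should actually carry out that identification; otherwise the paper's functoriality trick gives a complete and much cleaner proof.
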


Rather than trying to determine explicitly wether the isomorphism of Theorem \ref{cor inv brauer} is actually $\delta$ (this requires to calculate explicitly many homomorphisms hidden in spectral sequences) we bypass the difficulties with the following proof based on the functoriality in Theorem \ref{cor inv brauer}.

\begin{proof}
$\diamond$ Let's first show that $\delta : \mathrm{Ext}^1(G,\mathbb{G}_m) \to \inv(G,\br)_0$ is one-to-one. \\ Let $E$ be a group extension \[ 1 \to \mathbb{G}_m \to G^\prime \to G \to 1 \] in the isomorphism class $\mathcal{E} \in \mathrm{Ext}^1(G,\mathbb{G}_m)$, and assume $\delta^\mathcal{E}$ is the invariant constantly equal to $0 \in \br$.

Because of the long exact sequence in Galois cohomology attached to $E$, the fact that $\delta^\mathcal{E}$ is trivial implies 
that for all field extension $K/k$, the maps $\h^1(K,G^\prime) \to \h^1(K,G)$ are onto. Thus the homomorphism $\inv(G,\br)_0 \to \inv(G^\prime,\br)_0$ is one-to-one. So the canonical homomorphism $\mathrm{Ext}^1(G,\mathbb{G}_m) \to \mathrm{Ext}^1(G^\prime,\mathbb{G}_m)$ is also one-to-one by Theorem \ref{cor inv brauer}. Since $\mathcal{E} \in \ker(\mathrm{Ext}^1(G,\mathbb{G}_m) \to \mathrm{Ext}^1(G^\prime,\mathbb{G}_m))$, one sees that $\mathcal{E}$ is the neutral element of $\mathrm{Ext}^1(G,\mathbb{G}_m)$.

$\diamond$ Now let's show that $\delta : \mathrm{Ext}^1(G,\mathbb{G}_m) \to \inv(G,\br)_0$ is onto. The composite of the homomorphism $\delta$ followed by the isomorphism $\inv(G,\br)_0 \simeq \ext(G,\mathbb{G}_m)$ from Theorem \ref{cor inv brauer} is a one-to-one homomorphism $\sigma_G : \ext(G,\mathbb{G}_m) \to \ext(G,\mathbb{G}_m)$, and $\sigma_G$ is natural in $G$. Let $\mathcal{E} \in \ext(G,\mathbb{G}_m)$ be the isomorphism class of the group extension \[ (E) \: \: \: 1 \to \mathbb{G}_m \to H \to {G} \to 1 . \] We show that the subgroup $\langle \mathcal{E} \rangle$ of $\ext(G,\mathbb{G}_m)$ generated by $\mathcal{E}$ is map to itself through $\sigma_G$. On one hand, the kernel of the pullback map $\tau : \ext(G,\mathbb{G}_m) \to \ext(H,\mathbb{G}_m)$ is exactly $\langle \mathcal{E} \rangle$. On the other hand, since $\sigma_G$ is natural in $G$, the following diagram is commutative : \[ \xymatrix{
\mathcal{E}^\prime \in \ext(G,\mathbb{G}_m) \ar[d]_{\sigma_G} \ar[r]^-\tau & \ext(H,\mathbb{G}_m) \ar[d]^{\sigma_H} \\
\sigma(\mathcal{E}^\prime) \in \ext(G,\mathbb{G}_m) \ar[r]_-\tau & \ext(H,\mathbb{G}_m)
} . \] The homomorphism $\sigma_H$ is one-to-one, so for all $\mathcal{E}^\prime \in \ext(G,\mathbb{G}_m)$ one has the equivalence : $\tau(\mathcal{E}^\prime) =0 \Leftrightarrow \tau \circ \sigma_G(\mathcal{E}^\prime) =0$, which means $\sigma_G(\mathcal{E}^\prime) \in \langle \mathcal{E} \rangle$ if and only if $\mathcal{E}^\prime \in \langle \mathcal{E} \rangle$. The facts that $\sigma_G$ is injective and $\langle \mathcal{E} \rangle$ is finite imply $\sigma_H$ induces an isomorphism of $\langle \mathcal{E} \rangle$ onto itself. Thus one can find a preimage of $\mathcal{E}$ through $\sigma_G$ and so a preimage of $\mathcal{E}$ through $\delta$.
\end{proof}

\bibliographystyle{alpha}
\bibliography{/Users/alexlourdeaux/Documents/Mathematiques/Perso/Bibliographie.bib}

\end{document}